

\documentclass[11pt]{article} 

\setlength{\textwidth}{27pc} \setlength{\textheight}{43pc}

\usepackage[centertags]{amsmath}
\usepackage{amsfonts}
\usepackage{amssymb}
\usepackage{amsthm}
\usepackage{newlfont}

\newcommand{\comment}[1]{}


\addtolength{\hoffset}{-2cm} \addtolength{\textwidth}{5cm}
\addtolength{\voffset}{-0.5cm} \addtolength{\textheight}{2cm}

\newtheorem{theorem}{Theorem}
\newtheorem{lemma}{Lemma}

\newtheorem{proposition}{Proposition}

\newtheorem{definition}{Definition}

\theoremstyle{definition}
\newtheorem{rem}{Remark}
\theoremstyle{definition}
\newtheorem{example}{Example}

\begin{document}

\title{Stochastic Comparisons between hitting times for  Markov Chains and words' occurrences }
\author{
\small Emilio De Santis \\ \small Department of Mathematics \\
\small La Sapienza University of Rome \\ \small {\tt
desantis@mat.uniroma1.it} 
\and
\small Fabio Spizzichino \\ \small Department of Mathematics \\
\small La Sapienza University of Rome \\ \small {\tt
fabio.spizzichino@uniroma1.it} }

  \maketitle

\begin{abstract}
We develop some sufficient conditions for the stochastic ordering
between hitting times, in a fixed state, for two Markov chains. In
particular, we focus attention on the so called \emph{skip-free}
case. In the analysis of such a case, we develop a special type of
coupling. We also compare different types of relations between two,
non-necessarily skip-free, Markov chains on the same state space.
Such relations have a natural role in establishing the usual
 and the asymptotic stochastic ordering  between the
probability distributions of hitting times. Finally, we present some
discussions and  examples related with words' occurrences.
\end{abstract}

\noindent {\small \textbf{Key Words}: Skip-free Markov chains,
Coupling, Asymptotic Stochastic Ordering, Spectral-Gap,

Word Occurrences, Leading Numbers.}

\noindent 2010 Mathematics Subject Classification: Primary: 60E15,
60J10

\medskip

\section{Introduction} \label{introd}

We consider a Markov chain $\mathbf{X}\equiv\{X_{n}\}_{n=0,1,\ldots
}$ on the state space $E=E_{k}\equiv\{0,1,\ldots ,k\}$ or
$E=E_{\infty}\equiv\{0,1,\ldots \}$. We denote by $T_{h}$ the
stopping times
\begin{equation}\label{stop}
    T_{h}=\inf\{n\in\mathbb{N}:X_{n}\geq h\},\text{ \ \ }h=1,2,\ldots
,k.
\end{equation}
$T_{h}$ is thus the random time needed to reach or exceed the
\textit{level} $h$. In particular we will consider 
transition matrices $P=\left(  p_{i,j}\right) _{i,j\in E}$ that are
\textit{skip-free (on the right)}, i.e.
\begin{equation}\label{proprieta}
    p_{i,j}=0\text{ \ \ \ \ \ \ if }1\leq i+1<j.
\end{equation}
In such cases, if the Markov chain starts from the state zero, $T_h$
is the hitting time of the state $h$.

Throughout the paper, we will denote by $\Upsilon_{k}$ the class of
transition matrices on the state space $E_k$ satisfying
\eqref{proprieta}. We also say that the Markov chain is
\emph{skeep-free} if its transition matrix is in $\Upsilon_{k}$.
 Besides the theoretical interest, the analysis of the hitting times $T_{h}$ for this class of Markov
chains emerges in the applications of probability to different
fields such as reliability, networks, biology, and so on.
 The literature devoted to these topics is then very wide. See in
 particular \cite{Miclo,Fill1,Fill2,DiaMic,StePak97, Zh} and references cited therein, for results
 concerning the probability distributions of the hitting times
 $T_{h}$, under different assumptions.
Skip-free Markov chains are encountered, in a fairly direct way, in
the problem of first occurrences of words in random sequences of
letters from a finite alphabet.
This problem will be briefly recalled in the last section. A very
large literature has been devoted to such a field, also, in
different frameworks and from different points of view. Typically,
attention has been concentrated on different aspects of the exact
computation of $\mathbb{E}\left( T_{k}\right) $ or of the
probability distribution of $T_{k}$.

In this paper we rather consider, for pairs of  Markov chains
$\mathbf{X}$ and $\widetilde{\mathbf{X}}$ on the same state space
$E_{k}$, stochastic orderings between the corresponding hitting
times $T_{k}$ and $\widetilde{T}_{k}$. The idea of studying
stochastic ordering of hitting times was already considered in the
papers \cite{IG,FP07b,DiCR}. In particular, some of our results are
in the same spirit of the paper by Irle and Gani, i.e. \cite{IG},
who consider also the context of detection of words.

Different notions of stochastic orders might be considered for the
$\mathbb{N}$-valued random variables $T_{k}$ and $\widetilde{T}_{k}$
(see e.g. \cite{ShSh,Sto}); as natural ones in our context, we
consider the \textit{usual stochastic order } $T_{k}$ $\preceq_{st}$
$\widetilde{T}_{k}$ and the \emph{tail (or asymptotic) stochastic
order}, that is defined in terms of tail behavior of the
distributions. A rather detailed analysis of the stochastic tail
order has been offered in the recent work \cite{HaijunLi}.


In our results concerning the usual stochastic order, the assumption
that $\mathbf{X}$ and $\widetilde{\mathbf{X}}$ belong to
$\Upsilon_{k}$ will be specifically used. The proof of our results
in such direction will be based on a coupling method that takes
essentially into account the order structure of the state space
$E_{k}$. More precisely, on a same probability space, we construct
two Markov chains (sharing the laws of $\mathbf{X}$ and
$\widetilde{\mathbf{X}}$, respectively) in such a way that they are
\textquotedblleft coupled" only in some instants when they visit the
same states. When one of the two chains has a transition to  a state
\textquotedblleft higher" than the other one, it stops and waits for
the latter, which has an independent evolution in the meantime. A
similar approach had been also developed in \cite{FP05}.

For the results concerning the asymptotic stochastic order, we use
different methods of proofs. In such a frame, we introduce a
specific order relation between two stochastic matrices of the same
size. The circumstance that such a relation is maintained under
products will have a relevant role in our derivations. In this part
of the paper the condition of skip-free will not be necessary.

Our results may also be used to deal with  continuous-time Markov
chains. Processes in continuous time with a property analogous to
\eqref{proprieta} have been called \emph{free of positive skips}
(see \cite{Kei}).

 In the specific cases of word occurrences the probability distributions of $T_{h}$ may appear rather simple at a
first glance. In particular, similarly to the geometric ones, they
are completely determined by their expected values. However they
manifest several apparently paradoxical aspects (see in particular
\cite{CZ1, DSS}).

Several results, in the literature concerning waiting times to
words' occurrences, have been based on the notion of \textit{leading
number }associated to a word. Such an analysis can appear, in a
sense, alternative to the one based on Markov chains. We will point
out that the two different approaches can be usefully compared and
combined.

The structure of the paper is as follows. In Section~\ref{sec2}, we
present our results concerning the stochastic order between hitting
times for skip-free Markov chains. In Section \ref{sec:asintotico},
we start by analyzing the asymptotic stochastic order for hitting
times. We will also show that the obtained results can moreover be
applied to establishing the usual stochastic order. In that section,
we consider a more general setting, where the skip-free condition is
dropped.
In Section~\ref{section3}, we discuss and apply the results of
previous sections in the context of waiting times to words'
occurrences. We also point out the interest of combining the
approaches respectively based on Markov chains and leading numbers.

\section{Stochastic comparisons between hitting times for skip-free Markov chains} \label{sec2}


We consider two skip-free Markov chains  $\mathbf{X}$ and $
\widetilde{\mathbf{X}}$, with transition matrices   $ P=(
p_{i,j})_{i,j \in E}$, $ \widetilde{P}=( \widetilde{p}_{i,j})_{i,j
\in E}$, and initial distributions  $\pi_0 =(\pi_0 (i))_{i \in E}$,
$\widetilde \pi_0 =(\widetilde \pi_0 (i))_{i \in E}$, respectively.
We furthermore consider $T_h$ and $\widetilde T_h$ where $T_h$ is
defined for   $\mathbf{X}$ in \eqref{stop} and $\widetilde T_h$ is
the analogue for $ \widetilde{\mathbf{X}}$. For a transition  matrix
$ P=( p_{i,j} )_{i,j \in E_k } $, we will use the notation
$p^{(n)}_{i, \cdot} =( p^{(n)}_{i, 0}, \ldots, p^{(n)}_{i, k})  $,
where $ p^{(n)}_{i, j} $ denotes the transition probability from $i
$ to $j $ in $n$ steps. The vector $p^{(n)}_{i, \cdot}$ will be seen
as a probability distribution over $ E_k$.

As far as probability distributions of hitting times are concerned,
several results have been given in \cite{DiaMic, Fill2, Fill1,
Miclo}. In particular, such distributions have been studied in terms
of the eigenvalues of the transition matrix obtained by making the
\textquotedblleft highest"  state $k$ absorbing. In \cite{BrSh,
DiaMic, Fill2} it has been shown that the distribution of $T_k$ is a
convolution of geometric distributions when all the eigenvalues are
non-negative real numbers.

More than on exact probability distributions, our interest is
focused on sufficient conditions for stochastic comparisons between
$T_h$ and $\widetilde T_h$. The afore-mentioned distributional
results cannot be directly applied for our purposes, even though
they provide complete characterizations.

In this section we obtain  sufficient conditions for the usual
stochastic comparisons between $T_h$ and $\widetilde T_h$. For basic
definitions and properties about such a stochastic order see
\cite{SM,Sto}.

Our first result can be seen as an extension of Theorem 4.1 in
\cite{IG}. Actually, we obtain a same type of conclusion still under
easily-applicable conditions that are however weaker and more
flexible. The proof of our result is based on a coupling method, as
proposed in \cite{FP05}.


\begin{theorem}\label{vedere}
Let $ P= (p_{i,j}: i, j =0, \ldots , k ) $ and $ \widetilde{P}=
(\widetilde{p}_{i,j}: i, j =0, \ldots , k ) $ be two transition
matrices in $\Upsilon_{k}$. Assume that, for any $ i =0, \ldots ,
k-1$, there exists $m(i)$ such that
\begin{itemize}
    \item[i)] $i+m(i) \leq k$;
    \item[ii)] $p^{(m(i))}_{i, \cdot} \succeq_{st}
\widetilde{p}^{(m(i))}_{i, \cdot}$.
\end{itemize}
 Moreover suppose that the initial
measures are stochastically ordered $ \pi \succeq_{st}
\widetilde{\pi} $. Then
\begin{equation}\label{conclprop3}
T_k \preceq_{st} \widetilde{T}_k.
\end{equation}
\end{theorem}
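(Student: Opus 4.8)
The plan is to establish the usual stochastic order by a coupling argument: I will construct on one probability space two processes $Y$ and $\widetilde Y$, distributed respectively as $\mathbf X$ (with matrix $P$, initial law $\pi$) and $\widetilde{\mathbf X}$ (with $\widetilde P$, $\widetilde\pi$), for which the hitting time of level $k$ of $Y$ is almost surely not larger than that of $\widetilde Y$. Since the usual stochastic order is equivalent to the existence of such an almost sure domination, this yields \eqref{conclprop3}. The two processes will be run on separate ``clocks'': each is advanced only by genuine one-step transitions of its own matrix, but the clocks are allowed to progress at different global rates. The invariant I carry throughout is that, whenever the two chains sit at a common state, the clock of $Y$ does not exceed the clock of $\widetilde Y$; at the end this forces $Y$ to reach $k$ no later than $\widetilde Y$.

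First I would organize the evolution into blocks tied to the common visits. I couple the initial states so that $Y_0\ge\widetilde Y_0$, possible because $\pi\succeq_{st}\widetilde\pi$; then I freeze $Y$ and let $\widetilde Y$ run its own transitions until it first attains $Y_0$ --- which by skip-freeness it reaches exactly --- producing a first synchronization at a common state, with the clock inequality already in force. Inductively, from a synchronization at a common state $i$ I run \emph{both} chains for exactly $m(i)$ steps. Hypothesis (ii), $p^{(m(i))}_{i,\cdot}\succeq_{st}\widetilde p^{(m(i))}_{i,\cdot}$, together with the coupling characterization of stochastic order (Strassen), lets me couple the two $m(i)$-step endpoints so that the endpoint $b$ of $Y$ dominates the endpoint $a$ of $\widetilde Y$; the intermediate positions are filled in by independent $P$- and $\widetilde P$-bridges, so that each block is a genuine stretch of the respective chain. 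If $b>a$ I apply the ``stop and wait'' step: I freeze $Y$ at $b$ and let $\widetilde Y$ evolve on its own until it first hits $b$ (again exactly, by skip-freeness), giving the next synchronization at the common state $b$. Since in a block both clocks advance by the same amount $m(i)$, while in a wait only the clock of $\widetilde Y$ advances, the clock inequality is preserved, indeed reinforced, at every synchronization.

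The role of hypothesis (i), $i+m(i)\le k$, is precisely to keep the time bookkeeping exact. By skip-freeness a chain started at $i$ sits at most at level $i+j$ after $j$ steps, so inside a block of length $m(i)$ it stays strictly below $k$ at every intermediate step and can equal $k$ only at the final one. Hence neither chain can reach $k$ and fall back \emph{within} a block: the first time $Y$ attains $k$ is exactly the endpoint of some block, and similarly for $\widetilde Y$. This lets me identify the clock value at the terminal synchronization (the one at level $k$) with the true hitting time of $k$ of each chain. Combining this with the clock inequality carried through the induction gives $T_k(Y)\le T_k(\widetilde Y)$ almost surely, including the case of an infinite hitting time, and therefore \eqref{conclprop3}.

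I expect two delicate points to be the main obstacles. The first is verifying that each coordinate really has its prescribed law as a full Markov chain, despite being advanced in coupled blocks interspersed with freezes: here I would invoke the strong Markov property at the synchronization times --- each a stopping time for its own chain --- so that every new block and every catch-up is a fresh run of the appropriate transition matrix, while the endpoint coupling only ties together laws with the correct marginals. The second is the global time accounting: one must check that the two chains synchronize at the same sequence of common states, that $Y$ never needs to wait for $\widetilde Y$ (only the reverse occurs, thanks to $b\ge a$), and that condition (i) genuinely rules out a spurious early visit to $k$ followed by a return. Once these are settled, the conclusion follows at once.
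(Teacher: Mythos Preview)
Your proposal is correct and follows essentially the same coupling as the paper: advance the $P$-chain in blocks of length $m(i)$ from each synchronized state $i$, couple the block endpoints via hypothesis (ii), let the $\widetilde P$-chain catch up by skip-freeness during freeze periods, and use (i) to rule out hidden visits to $k$ inside a block. The paper's version is written out explicitly with uniform random variables and auxiliary $\widetilde P$-chains started from $0$ (from which the catch-up segments are extracted), and it additionally records the pointwise identity $\widetilde T_k = T_k + \text{(total catch-up time)}$, but the underlying construction coincides with yours.
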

We defer the proof of Theorem 1 after a couple of remarks.

\begin{rem}
 \label{finito}
  In Theorem 4.1 in \cite{IG}, the case where $m(i)=1$ for $i=1,\ldots ,k-1$
is considered. Our hypothesis, allowing $m(i)$ to vary with $i$,
permits to apply our result to a wider class of cases. An instance
of such a situation is presented in Example \ref{defin} below.

Our thesis only aims to compare $T_{k}$ $\ $\ with
$\widetilde{T}_{k}$, where $k$ is the highest level, which is
typically the one of specific interest.

The result in \cite{IG} on the contrary permits to compare $T_{h}$ \
with $\widetilde{T}_{h}$, for any $h=0,1, \ldots ,k$. However we can
be actually interested to prove $T_{k}$
$\preceq_{st}\widetilde{T}_{k}$, even in situations where
$\widetilde{T}_{h}\preceq_{st}T_{h}$ for some $h<k$.
\end{rem}
\begin{rem}
The method of proof of Theorem \ref{vedere} can be also convenient
for implementation in computer programs and it is based on the
skip-free property of Markov chains. A similar method of coupling
had been already implemented in \cite{FP05} with the aim to simulate
such Markov chains and to analyze Theorem 4.1 in \cite{IG}.

In our context, we also notice that such a method  leads to
efficient estimates of the difference between expected values of two
different hitting times. In fact, as can be easily proven, it is
more accurate that one based on the separate estimates of the two
expected values.   This circumstance turns out to be useful in
several situations of interest.

For instance, in the comparison between waiting times to words'
occurrences, where the expected values can be extremely large and
their differences relatively small, the estimate of expected values
might reveal numerically inaccurate if compared with direct
estimation of the difference between them.
\end{rem}

\begin{proof}
Let us fix a particular choice of $ m(1), \ldots , m(k-1)$ such that
i) and ii) hold. Moreover, for future convenience, we conventionally
fix $ m(k) =1 $. We will use a coupling method and we will obtain
the proof in a recursive way. \comment{Let us consider the space $
\Omega = [0,1]^{\mathbb{N}}$, $\mathcal{F}$ is the smallest
$\sigma$-algebra containing the Borel sets and the product Lebesgue
measure $\prod_{i =1}^\infty  Leb_i $.}

On a same probability space $(\Omega, \mathcal{F}, P)$, we  define a
sequence of i.i.d. random variables $\mathbf{U}= \{ U_n\}_{n \in
\mathbb{N} } $ and an independent array of i.i.d. random variables
$\mathbf{\widetilde U} =\{ \widetilde U_{k, n}\}_{k \in \mathbb{N},
n \in \mathbb{N}_+  } $. All these variables have uniform
distribution on $[0,1]$.

By using $ \mathbf{U} $, we will construct on $(\Omega, \mathcal{F},
P)$ a homogeneous Markov chain $ \mathbf{X} =(X_n)_{n\in
\mathbb{N}}$ having the law given by the initial distribution $ \pi
=(\pi_0, \ldots, \pi_k)$ and transition matrix $P= (p_{i,j})_{i, j
\in E_k} $. We will also construct,  by using $ \mathbf{U} $ and $
\mathbf{\widetilde U} $, a homogeneous Markov chain $
\mathbf{\widetilde X} =(\widetilde X_n)_{n \in \mathbb{N}}$ having
the law given by the initial distribution $ \widetilde \pi
=(\widetilde \pi_0, \ldots, \widetilde \pi_k)$ and transition matrix
$\widetilde P= (\widetilde p_{i,j})_{i, j \in E_k} $.
 We will
prove that the stopping times $T_k$  and $\widetilde T_k$,
corresponding to $\mathbf{X} $ and $ \mathbf{\widetilde X} $
respectively,  are ordered in the sense that
$$
 T_k(\omega) \leq \widetilde T_k(\omega),
$$
for each $\omega \in \Omega$.

First, we define $X_0$ and $\widetilde X_0$ with distribution $ \pi
$ and $\widetilde \pi$, respectively.

We set
\begin{equation}\label{tzero}
X_0 (U_0) := \inf\{ i \leq k: \sum_{l=0}^i \pi_l \geq U_0 \},
\end{equation}
and analogously
\begin{equation}\label{tzero2}
\widetilde X_0 (U_0) :=\inf\{ i \leq k: \sum_{l=0}^i \widetilde
\pi_l \geq U_0 \}.
\end{equation}
It is immediately seen that $ X_0 (U_0)\sim^d \pi  $ and $
\widetilde X_0 (U_0)\sim^d \widetilde \pi  $. Furthermore, for each
value $ u \in [0,1] $ $ X_0 (u) \geq \widetilde X_0 (u)$, in view of
the assumption $ \pi \succeq_{st} \widetilde{\pi} $.

Letting $ I(0)=I(0; U_0) := X_0 (U_0) $, and recalling the meaning
of
 $ m(1), \ldots , m(k-1),  m(k) =1 $,
we recursively define, for $n =1, 2, \ldots $
\begin{equation}\label{iterata1}
X_{ m(I(0))+ \ldots +  m(I(n-1)) }=X_{ m(I(0))+ \ldots +  m(I(n-1))
} (U_0, \ldots, U_n): =
  \inf\{ s \leq k: \sum_{l=0}^s p^{(m(I(n-1)))}_{ I(n-1),l} \geq U_{ n
  }\},
\end{equation}
\begin{equation}\label{iterata2}
 I(n)= I(n;U_0, \ldots, U_{n}) :=X_{ m(I(0))+ \ldots +  m(I(n-1)) }
 .
\end{equation}
 We notice that, if for a given  $n $, we obtain
$I(n) =k $ then also
$$
 I(n+1) =k   \hbox{ and  } X_{ m(I(0))+ \ldots +m(I(n-1))+1 } =k, \,\,\, a.s.
$$

We claim that
\begin{equation}\label{Tesplicito1}
  T_k  =  \sum_{r=1}^{L} m(I(r)),
\end{equation}
where $ L$ is the random index
\begin{equation}\label{indiceL}
    L : = \inf \{  l \in \mathbb{N}: I(l) = k  \} .
\end{equation}
We stipulate that  the sum (\ref{Tesplicito1}) is equal to zero if $
L=0 $. It is clear that,  when $ L=0$, (\ref{Tesplicito1}) holds.
Now we prove that also in the case $L >0$ the expression for $T_k$
given in (\ref{Tesplicito1}) holds true. In fact, at least the
inequality $T_k \leq \sum_{r=1}^{L} m(I(r))$ holds, since $I (L)=X_{
\sum_{r=1}^{L} m(I(r))} =k$, in view of positions (\ref{iterata1})
and (\ref{iterata2}).

We then want to show that for any $ t <  \sum_{r=1}^{L} m(I(r))$ one
has $X_t \neq k$. Let us first consider the values $a_s
=\sum_{r=1}^{s} m(I(r)) $ with $ s =1, \ldots , L-1 $. For these
values, $X_{a_s} =k $ would contradict the position (\ref{indiceL}).

Let us then consider the discrete intervals of the form $ B_s =
\{a_s +1, \ldots ,a_{s+1} -1\}$,
 with $s \in \{1, \ldots , L-1 \}$ and such that $ a_s +1 \leq  a_{s+1}
 -1$. For  $ a \in B_s$,  it is impossible that $X_a =k$. In fact  $
a+m(a ) \leq k$ for $ a \leq k-1$ and $ X_{c} - X_b \leq (c-b)$ for
any $c \geq b$.

\medskip

We now proceed to construct  $\widetilde T_k$. To this purpose we
consider a sequence of independent Markov chains $\{
\mathbf{\widetilde Y}^{(r)} \}_{r \in \mathbb{N}} $. For any $r=0
,1, \ldots$, the Markov chain $\mathbf{\widetilde Y}^{(r)}=
\{\widetilde Y^{(r)}_n \}_{n \in \mathbb{N}}$ will be such that
$\widetilde Y^{(r)}_0=0 $ with probability one and it will admit $
\widetilde P$ as transition matrix. More precisely
$\mathbf{\widetilde Y}^{(r)}$ is constructed in terms of $\{
\widetilde U_{r, 1} , \widetilde U_{r, 2}, \ldots \}$ as follows:
for $ n =1, 2, \ldots $
\begin{equation}\label{ytilde2}
 I^{(r)}(n-1) := \widetilde Y_{n-1}^{(r)}(\widetilde U_{r,1}, \ldots , \widetilde
U_{r,n-1}) ,
\end{equation}
\begin{equation}\label{ytilde1}
\widetilde Y_n^{(r)}(\widetilde U_{r,1}, \ldots , \widetilde
U_{r,n}) :=
  \inf\{ s \leq k: \sum_{l=0}^s \widetilde p_{ I^{(r)}(n-1),l} \geq \widetilde U_{r,  n
  }\}.
\end{equation}
As a function of $U_0 , U_1, , \ldots $, we now also define the
sequence $ \mathbf{\widetilde Y} = \{ \widetilde Y_n\}_{ n \in
\mathbb{N}}$ as follows:
\begin{equation}\label{iter}
 \widetilde Y_0=   \widetilde Y_0(U_0) := \widetilde X_0(U_0)
\end{equation}
\begin{equation}\label{iterata1a}
\widetilde Y_n(U_0, \ldots , U_n)  :=
  \inf\{ s \leq k: \sum_{l=0}^s \widetilde p^{(m(I(n-1)))}_{ I(n-1),l} \geq U_{ n
  }\}.
\end{equation}
Notice that the random variables $ I(0), I(1), \ldots $ appearing in
r.h.s. of (\ref{iterata1a}) have been defined in (\ref{iterata2}).
Furthermore we have, by construction, $ \widetilde Y_n (U_0, \ldots
, U_n) \leq I(n ;U_0, \ldots , U_n) $ in view of  condition ii). The
sequences $ \mathbf{\widetilde Y}$ and $\{ \mathbf{\widetilde
Y}^{(r)} \}_{r \in \mathbb{N}} $ are stochastically independent.

Let now, for $r \in \mathbb{N}$,
\begin{equation}\label{arresto1}
    N_1^{(r)} : = \inf \{ n\in \mathbb{N}: \widetilde Y_n^{(r)}  = \widetilde Y_r \},
\end{equation}
\begin{equation}\label{arresto2}
    N_2^{(r)} := \inf \{ n \in \mathbb{N}: \widetilde Y_n^{(r)}  =  I(r) \}.
\end{equation}
We notice that, for any $r =0, 1, 2, \ldots $, $ N_1^{(r)} \leq
N_2^{(r)} $ since the chain $ \mathbf{\widetilde Y}^{(r)} $ starts
in zero, it  increases at most of one unit at any step, and $
\widetilde Y_r \leq  I(r) $.  For any $r \in \mathbb{N}$, $
N_1^{(r)} $ and $ N_2^{(r)}$ are two stopping times with respect to
the filtration $( \mathcal{F}^{(r)}_n)_{ n \in \mathbb{N} }  $ where
$ \mathcal{F}^{(r)}_0  =\sigma (\widetilde Y_r, I(r))$ and
$$
\mathcal{F}^{(r)}_n = \sigma (\widetilde Y_r,   I(r), \widetilde
U_{r, 1}, \ldots , \widetilde U_{r, n}) \hbox{ for any } n=1,2,
\ldots .
$$
Now we consider the random variables
\begin{equation}\label{tempi1}
Z_r := \sum_{n=0}^r (N_2^{(n)}  - N_1^{(n)})+\sum_{n=0}^r m(I(n)),
\end{equation}
for $r =1, \ldots , L$ and
\begin{equation}\label{tempiale}
    \widetilde X_{Z_r} := \widetilde Y_r .
\end{equation}

Notice that, letting $r=L$ in \eqref{tempi1}, one has
\begin{equation}\label{identTk}
Z_L := \sum_{n=0}^L (N_2^{(n)}  - N_1^{(n)})+\sum_{n=0}^L m(I(n))=
\sum_{n=0}^L (N_2^{(n)}  - N_1^{(n)})+T_k,
\end{equation}
Furthermore, by recalling definition (\ref{arresto1}), $ \widetilde
X_{Z_r} =\widetilde Y_{N_1^{(r)}}^{(r)}$.

We now consider the following sequence of random variables:
\begin{equation}\label{sequen}
    \widetilde Y_0 ,  \widetilde Y_{N_1^{(0)}  +1  }^{(0)}, \ldots ,\widetilde
    Y_{N_2^{(0)}}^{(0)}, \widetilde Y_1 ,  \widetilde Y_{N_1^{(1)}   +1       }^{(1)}, \ldots ,\widetilde
    Y_{N_2^{(1)}}^{(1)}, \widetilde Y_2,
\end{equation}
obtained by gluing together the  sections of trajectories
$$ \widetilde
Y_0 ;  \widetilde Y_{N_1^{(0)}  +1  }^{(0)}, \ldots ,\widetilde
    Y_{N_2^{(0)}}^{(0)} ;  \widetilde Y_1;   \widetilde Y_{N_1^{(1)}
     +1       }^{(1)}, \ldots ,\widetilde
    Y_{N_2^{(1)}}^{(1)}; \widetilde Y_2 ; \ldots .
    $$

    Notice that some of the sections $ \widetilde Y_{N_1^{(r)}  +1  }^{(r)}, \ldots ,\widetilde
    Y_{N_2^{(r)}}^{(r)} $ can be missing. This happens when $ \widetilde Y_r = X_r
    $. We now set
\begin{equation}\label{tempiale2}
    \widetilde X_{Z_r+i} := \widetilde Y^{(r)}_{N_1^{(r)} +i}  , \,\,\, i
    =1, \ldots ,N_2^{(r)}-N_1^{(r)}.
\end{equation}
In view of the strong Markov property, the joint probability
distribution of  the random variables $ \widetilde X$'s defined by
(\ref{tempiale}) and (\ref{tempiale2}) coincides, by construction,
with a finite dimensional distribution for a Markov chain with
initial law $\widetilde \pi $ and transition matrix $ \widetilde P$.
The random variables $\widetilde X $'s have not been  defined for
any time $t \in \mathbb{N} $. By Kolmogorov's existence theorem,  we
can consider however the entire chain  $ \widetilde X_0 , \widetilde
X_1, \widetilde X_2, \ldots  $ by suitably adding variables at the
missing times. From \eqref{indiceL} and \eqref{identTk},we have
\begin{equation}\label{ffrr}
    \widetilde X_{ T_k  + \sum_{ r=0  }^{L} ( N_2^{(r)}  - N_1^{(r)})}
    =k.
\end{equation}
Furthermore, by repeating the same argument used above, we can also
obtain
\begin{equation}\label{ama}
    \widetilde X_l < k ,
\end{equation}
for $ l =0, \ldots ,{ T_k  + \sum_{ r=0  }^{L} ( N_2^{(r)}  -
N_1^{(r)})} -1$. Thus
\begin{equation}\label{precisa}
    \widetilde T_k = T_k  + \sum_{ r=0 }^{L} (
N_2^{(r)}  - N_1^{(r)}  ),
\end{equation}
 therefore $ \widetilde T_k \geq T_k  $, whence the stochastic
 comparison in (\ref{conclprop3})  follows.
\end{proof}

\begin{rem}\label{remogirone}
    The validity of the relation (\ref{precisa}), proven above, is much
more informative than simply $ \widetilde T_k \preceq_{st} T_k $,
and it allows in particular to provide different types of
inequalities.
\end{rem}

For our purposes we reformulate  Theorem 4.1 in \cite{IG} as
follows. Such a result gives a stronger conclusion with respect to
Theorem \ref{vedere} but under much stronger conditions.


\begin{theorem}\label{p1}
Let  $\mathbf{X}$ and $ \widetilde{\mathbf{X}}$ be skip-free Markov
chains with transition matrices   $ P=( p_{i,j})_{i,j \in E}$, $
\widetilde{P}=( \widetilde{p}_{i,j})_{i,j \in E}$, and initial
distributions  $\pi_0 =(\pi_0 (i))_{i \in E}$,  $\widetilde \pi_0
=(\widetilde \pi_0 (i))_{i \in E}$, respectively .
Under the conditions
    \begin{equation}\label{ordinatea}
    p_{i,\cdot } \succeq_{st} \widetilde{p}_{i,\cdot }  \hbox{ for each } i =0, \ldots,
    k-1,
\end{equation}
\begin{equation}\label{orro}
\pi_0 \succeq_{st} \widetilde \pi_0,
\end{equation}
 one has the stochastic comparison
\begin{equation}\label{th}
T_h \preceq_{st} \widetilde{T}_h , \hbox{ for } h =1,\ldots , k,
\end{equation}
 where $ T_h   $ is defined for $\mathbf{X}$
in (\ref{stop}) and $\widetilde{T}_h  $ is the analogue for $
\widetilde{\mathbf{X}}$.
\end{theorem}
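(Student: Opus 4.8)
The plan is to deduce Theorem~\ref{p1} from the coupling already established in Theorem~\ref{vedere}, specialized to the choice $m(i)=1$, and to handle the quantifier ``for every $h$'' by a level-by-level absorbing reduction. I would deliberately avoid a naive pathwise-dominating coupling (aiming at $X_n\ge\widetilde X_n$ for all $n$), since that would require stochastic monotonicity of $P$ and $\widetilde P$ in the starting state, which is \emph{not} assumed: condition \eqref{ordinatea} compares transition rows only vertically, at a fixed starting state, and gives no control of $p_{a,\cdot}$ against $\widetilde p_{b,\cdot}$ when $a>b$. This is exactly the difficulty that the wait-and-catch-up coupling of Theorem~\ref{vedere} is designed to absorb.

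First I would dispose of the top level $h=k$. Taking $m(i)=1$ for every $i=0,\ldots,k-1$, hypothesis (i) of Theorem~\ref{vedere} reads $i+1\le k$, which holds, while hypothesis (ii) becomes $p^{(1)}_{i,\cdot}\succeq_{st}\widetilde p^{(1)}_{i,\cdot}$, i.e.\ precisely \eqref{ordinatea}. Combined with \eqref{orro}, Theorem~\ref{vedere} yields $T_k\preceq_{st}\widetilde T_k$, which is \eqref{th} for $h=k$.

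Next, for an arbitrary $h\in\{1,\ldots,k\}$, I would promote level $h$ to the role of the top state. Fix $h$ and let $\phi_h\colon E_k\to E_h$ be the nondecreasing truncation $\phi_h(i)=\min(i,h)$. Define matrices $P^{(h)},\widetilde P^{(h)}$ on $E_h$ that coincide with $P,\widetilde P$ on the rows $0,\ldots,h-1$ and make the state $h$ absorbing. Here the skip-free property is essential: from any $i\le h-1$ the original chain can move only to states $\le i+1\le h$, so these rows are already supported on $E_h$ and no mass is truncated; hence $P^{(h)},\widetilde P^{(h)}\in\Upsilon_{h}$, and on $E_h$ the comparison $P^{(h)}_{i,\cdot}\succeq_{st}\widetilde P^{(h)}_{i,\cdot}$ for $i=0,\ldots,h-1$ is just \eqref{ordinatea} read on the smaller space. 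Pushing the initial laws forward through $\phi_h$ preserves the stochastic order, since it is a nondecreasing map, so the projected initial distributions remain ordered. Applying Theorem~\ref{vedere} on $E_h$ (again with $m\equiv 1$) gives the ordering of the hitting times of the top state $h$ for the two modified chains.

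Finally I would close the argument by showing that these modified hitting times have the same laws as the original $T_h$ and $\widetilde T_h$. Indeed, up to the first visit of $h$ the modified dynamics agree with the original ones, because only the rows $0,\ldots,h-1$ and the initial mass on states $<h$ matter; making $h$ absorbing cannot alter the first passage to $h$; and the truncation of the initial law only lumps together the states $\ge h$, all of which correspond to the event $\{T_h=0\}$. Thus $T_h$ and its modified counterpart coincide in law, and likewise for $\widetilde T_h$, so the ordering obtained on $E_h$ transfers to \eqref{th}. The only genuine work, beyond invoking Theorem~\ref{vedere}, is verifying that this absorbing reduction simultaneously preserves the skip-free property, the row-wise and initial stochastic orderings, and the relevant hitting-time distribution; I expect this bookkeeping to be the main, though routine, obstacle.
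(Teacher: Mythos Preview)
Your argument is correct. The paper itself does not supply a proof of Theorem~\ref{p1}: it is presented as a reformulation of Theorem~4.1 in Irle--Gani~\cite{IG}, so there is no in-paper proof to compare against directly. Your route---specialize Theorem~\ref{vedere} to $m(i)\equiv 1$ to get the case $h=k$, and then for general $h$ truncate the state space at level $h$ (making $h$ absorbing) and reapply Theorem~\ref{vedere} on $E_h$---is exactly the natural way to recover the all-$h$ conclusion from the paper's own machinery, and the skip-free assumption is what makes the truncation harmless (rows $0,\ldots,h-1$ already live on $E_h$, so no mass is lost and the first-passage law to $h$ is unchanged). This is consistent with the paper's own Remark~\ref{finito}, which observes that \cite{IG} is precisely the $m(i)=1$ case but with the stronger conclusion $T_h\preceq_{st}\widetilde T_h$ for every $h$; you have simply made explicit the absorbing reduction that bridges the gap. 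The bookkeeping you flag (preservation of skip-free, of the row-wise and initial stochastic orderings, and of the hitting-time law under the pushforward by $\phi_h$) is indeed routine and goes through as you describe.
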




\comment{=====================================================
 Fix $ h  = 1,
\ldots , k $.
 The processes $ \mathbf{X}$ and $ \widetilde{\mathbf{X}
    }$ associated to $P$ and $\widetilde{P}$ are homogeneous Markov
    chains belonging to $\Upsilon_k$. As consequence we obtain that, for any $t$ and $m <h$, the function
    \begin{equation}\label{phiincr}
\widetilde{\phi} (t, u ) = P( \widetilde{T}_h \geq t |
\widetilde{T}_m =u)
\end{equation}
    is increasing in $ u$.

    In fact we can write $\widetilde{T}_h  = \widetilde{T}_m
 +\widetilde{T}_h^{(m)}   $, where $\widetilde{T}_h$, $
 \widetilde{T}_h^{(m)}  $ are independent and $
 \widetilde{T}_h^{(m)}  $ is defined as the random time needed to
 reach level $h $ starting from level $m$. Whence $  \widetilde{\phi} (t, u ) =
 P( \widetilde{T}_h^{(m)} \geq t - u  ) $.

 One also has, for any $ t $ and $u$,
\begin{equation}\label{phicompar}
P( T_h \geq t |{T}_m =u)  \geq  P( \widetilde{T}_h \geq t |
\widetilde{T}_m =u) .
\end{equation}
   In the proof of this part we use the assumption (ii).  However
   the  proof is rather involved and it is still to be written in a final form.

By taking into account the assumption (i) and a) and b), we can
write the following chain of inequalities
$$
P( \widetilde{T}_k \geq t ) = \sum_{u =1}^{t} P( \widetilde{T}_k\geq
 t |  \widetilde{T}_m =u )  P(  \widetilde{T}_m =u  ) \leq \sum_{u =1}^{t} P( \widetilde{T}_k \geq
 t |  \widetilde{T}_m =u ) P(  {T}_m =u  )
$$
$$
\leq \sum_{u =1}^{t} P( {T}_k \geq
 t |  {T}_m =u ) P(  {T}_m =u  ) =P( {T}_k \geq t ) .
$$
=====================================================}

We present an example in which the hypothesis of Theorem
\ref{vedere} are satisfied but both the hypothesis and the thesis of
Theorem \ref{p1} fail.

\begin{example}\label{defin}
    Let us consider $E = \{0,1,2,3\}$ and the transition matrices:
    \begin{equation}\label{exved}
P=\left(%
\begin{array}{cccc}
  \frac{1}{2}+\epsilon & \frac{1}{2}-\epsilon & 0 & 0 \\
  0 & \frac{1}{2} & \frac{1}{2} & 0 \\
  \frac{1}{2} & 0 & 0 & \frac{1}{2} \\
  0 & 0 & 0 & 1 \\
\end{array}%
\right), \,\,\,\,\,\,\, \widetilde P=
\left(%
\begin{array}{cccc}
  \frac{1}{2} & \frac{1}{2} & 0 & 0 \\
  1-\epsilon & 0 & \epsilon & 0 \\
  \frac{1}{2} & 0 & 0 & \frac{1}{2} \\
  0 & 0 & 0 & 1 \\
\end{array}%
\right),
\end{equation}
where $\epsilon \in (0, \frac{1}{2})$. We assume that the initial
measure of both Markov chains is concentrated on the state zero. For
no value  $ \epsilon \in (0, \frac{1}{2})$ hypothesis of Theorem
\ref{p1} is  satisfied. The first rows of the matrices $P^2 $ and $
\widetilde P^2 $ are respectively given by
 \begin{equation}\label{exved2}
p_{0, \cdot}^{(2)} =\left ((\frac{1}{2} +\epsilon)^2, \frac{1}{2} -
\epsilon^2- \frac{\epsilon}{2}, \frac{1}{4} - \frac{\epsilon}{2} , 0
\right ) , \,\,\,\,\, \widetilde{p}_{0, \cdot}^{(2)}=\left
(\frac{3-2 \epsilon}{4}, \frac{1}{4} , \frac{\epsilon}{2}, 0  \right
).
\end{equation}
Therefore we can take $ m(0)=2 $ and $m(1)=m(2)=1$ to verify the
hypothesis of Theorem~\ref{vedere}, when $\epsilon $ is small
enough. This shows that $ T_3 \preceq_{st} \widetilde T_3 $, but $
\widetilde{T}_1 \preceq_{st} T_1 $.
\end{example}

The following result appears, at a first glance, to be similar to
Theorem \ref{p1}. However it offers a much wider range of
applications. In  Section \ref{section3}, examples will be presented
in the frame of word occurrences. Also the proof of this result
 can be obtained along the
same line of Theorem \ref{vedere}, and will then be omitted.

\begin{theorem}\label{serve}
 Given two transition matrices in the space
$\Upsilon_{k}$, namely  $ P= (p_{i,j}: i, j =0, \ldots , k ) $ and $
\widetilde{P}= (\widetilde{p}_{i,j}: i, j =0, \ldots , k ) $. Let
the initial measures be  $ \pi = \widetilde{\pi} = \delta_0 $ (both
the Markov chains start in zero almost surely). Suppose that there
exists an integer $ m \in [1,k-1 ] $ such that
\begin{itemize}
    \item[(i)] $ \widetilde{T}_m \succeq_{st} T_m$,
    \item[(ii)] for each $ i \in [m, k-1]$ $
\widetilde{p}_{i,i+1} \leq p_{i,i+1} $ and $  \widetilde{p}_{i,0} +
\widetilde{p}_{i,i+1} =1 $.
\end{itemize}
Then $ \widetilde{T}_i \succeq_{st} {T}_i$ for $ i \in [m , k ] $.
\end{theorem}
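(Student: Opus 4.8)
The plan is to prove the statement by induction on the level $i$, the base case $i=m$ being exactly hypothesis (i). The inductive step rests on a renewal decomposition of the first passage time from one level to the next: this decomposition is an \emph{exact} identity for $\widetilde{\mathbf X}$, because condition (ii) forces a reset-to-zero structure above level $m$, while for $\mathbf X$ it holds only as a stochastic \emph{upper} bound. I would then feed $T_i\preceq_{st}\widetilde T_i$ (the inductive hypothesis) and the inequality $\widetilde p_{i,i+1}\le p_{i,i+1}$ into these decompositions to conclude $T_{i+1}\preceq_{st}\widetilde T_{i+1}$.

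The first ingredient is a monotonicity property of skip-free chains. By the skip-free condition \eqref{proprieta}, to reach a level $i$ starting from any state $j\le i$ the chain must pass through every intermediate level; hence, by the strong Markov property, the first passage time to $i$ from $j$ is stochastically nonincreasing in $j$ on $\{0,\dots,i\}$, and in particular is $\preceq_{st}T_i$. Now fix $i\in[m,k-1]$ and decompose the passage from $i$ to $i+1$ into excursions from state $i$. For $\widetilde{\mathbf X}$, condition (ii) forces every step out of $i$ to go either to $i+1$ (probability $\widetilde p_{i,i+1}$) or to $0$ (probability $\widetilde p_{i,0}=1-\widetilde p_{i,i+1}$); writing $\widetilde G\sim\mathrm{Geom}(\widetilde p_{i,i+1})$ for the number of visits to $i$ up to the first successful jump and $\widetilde T_i^{(j)}$ for i.i.d.\ copies of $\widetilde T_i$ independent of $\widetilde G$, the strong Markov property yields the exact identity
\[
\widetilde T_{i+1}\stackrel{d}{=}\sum_{j=1}^{\widetilde G}\bigl(\widetilde T_i^{(j)}+1\bigr).
\]
For $\mathbf X$ the same excursion analysis applies with success probability $p_{i,i+1}$, except that a failed excursion may return to $i$ from an intermediate state rather than from $0$; by the monotonicity just recorded, such an excursion has length $\preceq_{st}1+T_i$, giving the stochastic upper bound
\[
T_{i+1}\preceq_{st}\sum_{j=1}^{G}\bigl(T_i^{(j)}+1\bigr),\qquad G\sim\mathrm{Geom}(p_{i,i+1}),
\]
with $G$ and the i.i.d.\ copies $T_i^{(j)}$ independent.

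To close the induction I would combine three monotone facts: the inductive hypothesis $T_i\preceq_{st}\widetilde T_i$, whence $T_i^{(j)}\preceq_{st}\widetilde T_i^{(j)}$; the comparison $G\preceq_{st}\widetilde G$, which follows from $\widetilde p_{i,i+1}\le p_{i,i+1}$ in (ii) (a larger success probability gives a stochastically smaller geometric count); and the fact that the map $(n,t_1,t_2,\dots)\mapsto\sum_{j=1}^{n}(t_j+1)$ is coordinatewise nondecreasing. Since $\preceq_{st}$ is preserved under nondecreasing maps of independent arguments — realizable by the same ``wait for the slower chain'' coupling used in the proof of Theorem \ref{vedere} — these combine to give $\sum_{j=1}^{G}(T_i^{(j)}+1)\preceq_{st}\sum_{j=1}^{\widetilde G}(\widetilde T_i^{(j)}+1)$, so that $T_{i+1}\preceq_{st}\widetilde T_{i+1}$, i.e.\ $\widetilde T_{i+1}\succeq_{st}T_{i+1}$. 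This advances the induction from $i$ to $i+1$ for every $i\in[m,k-1]$ and hence establishes $\widetilde T_i\succeq_{st}T_i$ throughout $[m,k]$.

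I expect the main obstacle to be the $\mathbf X$-side upper bound: unlike $\widetilde{\mathbf X}$, the chain $\mathbf X$ need not reset to $0$ after a failed attempt from $i$, so the failure excursions are not i.i.d.\ copies of $1+T_i$ but only stochastically dominated by them. Making this precise requires simultaneously invoking the skip-free monotonicity of first passage times in the starting state and the renewal independence of successive excursions from $i$; once these are in place, the comparison of random geometric sums is routine.
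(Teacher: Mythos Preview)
Your argument is correct. The paper does not actually write out a proof of Theorem~\ref{serve}; it only says the result ``can be obtained along the same line of Theorem~\ref{vedere}'', meaning the explicit pathwise coupling in which the faster chain pauses and waits for the slower one at each level. Your route is different in organisation: you induct on the level $i$, and at the inductive step you exploit the reset-to-zero structure forced by (ii) to write $\widetilde T_{i+1}$ \emph{exactly} as a geometric random sum $\sum_{j=1}^{\widetilde G}(\widetilde T_i^{(j)}+1)$, while for $\mathbf X$ the skip-free monotonicity of first passage times in the starting state yields only the corresponding stochastic upper bound; the step then closes by the standard preservation of $\preceq_{st}$ under monotone functions of independent arguments. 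The coupling of Theorem~\ref{vedere} would deliver, in addition, a pathwise inequality and a representation of $\widetilde T_k-T_k$ analogous to~\eqref{precisa}; your renewal decomposition is more modular, produces the whole chain $\widetilde T_i\succeq_{st}T_i$, $i\in[m,k]$, as the inductive trajectory itself, and makes transparent why the specific form of (ii) --- from state $i$, $\widetilde{\mathbf X}$ jumps only to $0$ or to $i+1$ --- is exactly what turns the $\widetilde{\mathbf X}$ side into an identity rather than an inequality.
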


\section{Analysis based on the asymptotic stochastic comparisons} \label{sec:asintotico}

In this section, we  consider the tail behaviors of hitting times $
T_k$ and $ \widetilde{T}_k$ corresponding to two Markov chains. More
precisely, we introduce in our analysis the asymptotic
stochastic comparison between $ T_k$ and $ \widetilde{T}_k$,  
according to the following definition.
\begin{definition}\label{asintord}
Given two random variables $X$ and $Y$ we write $ X \preceq_{a.st.}
Y$ if there exists $ t_0 \in \mathbb{R}$ such that $ P(X >t) \geq
P(Y >t)$, for each $ t \geq t_0 $.
\end{definition}
Consider the equivalence classes formed by probability distributions
over $\mathbb{R}$, that admit the same right tail. By considering
the quotient sets with respect to such equivalence relation the
asymptotic stochastic order is a partial order.

 Such a notion of ordering can find interesting
applications in probability (see also \cite{HaijunLi} and references
therein).

We point out that special conditions, such as the skip-free
property, are not needed in this section.

The following definition will be relevant to analyze the condition $
T_k \preceq_{a.st.} \widetilde{T}_k $. It  will moreover  provide
useful information to check the condition $ T_k \preceq_{st}
\widetilde{T}_k $.

\begin{definition}\label{rtriangolo}
Let $A=(a_{i,j}: i,j =0, \ldots , k)$ and $A'=(a'_{i,j}: i,j =0,
\ldots , k)$ be stochastic matrices. We write $A' \unlhd A $ if and
only if
\begin{equation}\label{bezecca}
    a'_{i, \cdot} \preceq_{st} a_{j, \cdot},  \,\,\,\,  \forall  i \leq
j \leq k .
\end{equation}
\end{definition}

\begin{rem}\label{remstochmon}
The relation  $\unlhd $ is stronger than \eqref{ordinatea}  and it
is transitive as well. However it is not reflexive. In this respect
we have that the relation $ P \unlhd P$ holds if and only if $P$ is
\emph{stochastically monotone}. More in general one can see that $A'
\unlhd A $ holds if and only if a stochastically monotone matrix $B
$ exists such that $ A' \unlhd B \unlhd A  $, see \cite{Sto} for a
general treatment of related topics.
\end{rem}

In view of our purposes, and for simplicity's sake, we consider in
 what follows  that the state $k $ is absorbing. This assumption however
is not restrictive at all.
  We also assume that the initial
measure for all the chains is concentrated on the state zero.

 The interest of Definition \ref{rtriangolo} in the present contest
 dwells in the next result.

\begin{theorem}\label{corord}
Let $\mathbf{X} =( X_n)_{n \in \mathbb{N}}$ and
$\mathbf{\widetilde{X}} =( \widetilde{X}_n)_{n \in \mathbb{N}}$ be
two Markov chains on $E_k =\{0, \ldots k\}$, both having initial
measure concentrated on zero, with transition matrices $ P$ and
$\widetilde{P}$ respectively. Let $k $ be an absorbing state for
both the matrices. Furthermore let $ \widetilde{P}^{n}\unlhd P^{n}$
for all $n $ large enough then $ T_k \preceq_{a.st.} \widetilde
T_k$.
\end{theorem}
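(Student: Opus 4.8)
The plan is to turn the comparison of the hitting times into a comparison of a single entry of the iterated transition matrices, exploiting the standing hypotheses that both chains start at $0$ and that $k$ is absorbing. First I would record the elementary identity that the absorbing assumption provides: since $k$ is never left once reached, the event $\{T_k > n\}$ coincides with $\{X_n < k\}$, and likewise for the tilded chain. As both initial laws are $\delta_0$, this gives, for every $n \in \mathbb{N}$,
\[
P(T_k > n) = 1 - p^{(n)}_{0,k}, \qquad P(\widetilde{T}_k > n) = 1 - \widetilde{p}^{(n)}_{0,k}.
\]
Thus the whole problem reduces to comparing $p^{(n)}_{0,k}$ with $\widetilde{p}^{(n)}_{0,k}$ for all large $n$.

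Next I would extract exactly that comparison from the hypothesis. Let $N$ be such that $\widetilde{P}^{n} \unlhd P^{n}$ for all $n \ge N$. Specializing Definition~\ref{rtriangolo} to the admissible pair $i = j = 0$ yields $\widetilde{p}^{(n)}_{0,\cdot} \preceq_{st} p^{(n)}_{0,\cdot}$, an inequality between two probability distributions on the ordered set $\{0,\ldots,k\}$, i.e. $\sum_{l \ge m} \widetilde{p}^{(n)}_{0,l} \le \sum_{l \ge m} p^{(n)}_{0,l}$ for every $m$. Evaluating this at the top state $m = k$ gives $\widetilde{p}^{(n)}_{0,k} \le p^{(n)}_{0,k}$, and hence $P(T_k > n) \le P(\widetilde{T}_k > n)$ for every $n \ge N$. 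Since both survival functions are constant on each interval $[n,n+1)$ (the hitting times being $\mathbb{N}$-valued), this integer inequality extends to all real $t \ge N$; choosing $t_0 = N$ in Definition~\ref{asintord} delivers $T_k \preceq_{a.st.} \widetilde{T}_k$.

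I do not expect a genuine analytic obstacle: the argument is essentially two structural remarks. The step that must be handled with care is the absorbing reduction of the first paragraph, since it is precisely what collapses the tail $P(T_k > n)$ into the single matrix entry $p^{(n)}_{0,k}$; without $k$ absorbing the event $\{T_k > n\}$ would no longer equal $\{X_n < k\}$. The second point worth stressing is that the proof consumes far less than the hypothesis offers: only the diagonal row relation $\widetilde{p}^{(n)}_{0,\cdot} \preceq_{st} p^{(n)}_{0,\cdot}$ is used, not the full family $i \le j$ defining $\unlhd$. The reason the statement is cast through $\unlhd$ is its stability under products recorded in Remark~\ref{remstochmon}: that stability lets one deduce $\widetilde{P}^{n} \unlhd P^{n}$ for all $n$ from the single one-step relation $\widetilde{P} \unlhd P$, so that the hypothesis becomes verifiable in practice. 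It is this product-stability, rather than the short tail estimate above, where the real content of the section lies.
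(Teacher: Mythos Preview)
Your proof is correct and follows essentially the same route as the paper: reduce $\{T_k>n\}$ to $\{X_n\neq k\}$ via absorption, read off the single entry $p^{(n)}_{0,k}$, and extract $\widetilde p^{(n)}_{0,k}\le p^{(n)}_{0,k}$ from $\widetilde P^n\unlhd P^n$. One small correction in your closing commentary: the product stability of $\unlhd$ is the content of Lemma~\ref{mazzini}, not Remark~\ref{remstochmon}.
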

\begin{proof}
We want to check that $ P( T_k > L) \leq  P( \widetilde T_k > L)$
for $L$ large enough. We remark, since  the state $k$ is absorbing,
that the  identity  $\{ T_k >L \} =\{ X_L \not = k \} $ holds. Then
$ P ( T_k >L ) = 1- p^{(L)}_{0, k }$ and similarly $ P ( \widetilde
T_k
>L ) = 1- \widetilde p^{(L)}_{0, k }$. In fact the
          condition $ \widetilde P^{L}\unlhd  P^{L}   $ in particular
          implies $\widetilde p^{(L)}_{0,k} \leq {p}^{(L)}_{0,k}$.
\end{proof}

We now present some results concerning the condition  $ P^{n}\unlhd
\widetilde P^{n}$ for $n $ large enough. First we give a
probabilistic characterization of the relation $\unlhd$.

\begin{lemma}\label{garibaldi}
Let $A$, $A'$ be stochastic matrices on the state space $E=\{0,
\ldots, k \}$.
 $ A \unlhd A' $ if and only if a Markov chain $(Z_n)_{n =0,1}$ with $ Z_n =(Y_n, Y'_n)$ on the state space $E^2$
 exists with the following properties:
 \begin{itemize}
    \item[i)] $ (Y_n)_{n =0,1} $ is a Markov chain with transition matrix
    $A$.  $(Y'_n)_{n =0,1}  $ is a Markov chain with transition matrix
    $A'$.
    \item[ii)] $P( Y_1 \leq Y_1' | Y_0=i , Y'_0=i' )=1$, for $i,i' \in E$, with $ i \leq i'
    $.
 \end{itemize}
\end{lemma}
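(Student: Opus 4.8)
The plan is to read this as a row-wise instance of the coupling characterization of stochastic order (Strassen's theorem), which on the totally ordered finite set $E$ is realized explicitly by the monotone \emph{quantile} coupling. Throughout I read condition i) in the strong sense, namely that $(Y_n)$ is Markov with transition $A$ relative to the joint filtration $\sigma(Z_0,\ldots,Z_n)$, so that $P(Y_1=l\mid Y_0=i,\,Y'_0=i')=a_{i,l}$ and symmetrically $P(Y'_1=l\mid Y_0=i,\,Y'_0=i')=a'_{i',l}$. This is the convention implicit in coupling constructions, it is what the forward construction delivers automatically, and it is exactly what makes the converse clean.

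For the forward implication, assume $A\unlhd A'$, that is $a_{i,\cdot}\preceq_{st}a'_{j,\cdot}$ whenever $i\le j$. I would define the one-step transition of $Z$ out of a state $(i,i')$ by the quantile coupling, mimicking \eqref{tzero}--\eqref{tzero2}: draw a single $U$ uniform on $[0,1]$ and set
\[
Y_1:=\inf\Big\{s\le k:\textstyle\sum_{l=0}^s a_{i,l}\ge U\Big\},\qquad
Y'_1:=\inf\Big\{s\le k:\textstyle\sum_{l=0}^s a'_{i',l}\ge U\Big\}.
\]
By construction the $Y$-marginal of this transition is $a_{i,\cdot}$ and the $Y'$-marginal is $a'_{i',\cdot}$, so i) holds. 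When $i\le i'$ the hypothesis gives $\sum_{l=0}^{s}a_{i,l}\ge\sum_{l=0}^{s}a'_{i',l}$ for every $s$; hence the first partial sum reaches the level $U$ no later than the second, forcing $Y_1\le Y'_1$ for every value of $U$, which is ii). Choosing any full-support initial law on $E^2$ makes the conditioning in ii) well defined and produces the required chain $(Z_n)_{n=0,1}$.

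For the converse, suppose such a chain exists, fix $i\le j$, and condition on $\{Y_0=i,\,Y'_0=j\}$. By i) (in the strong sense above) the conditional laws of $Y_1$ and $Y'_1$ are $a_{i,\cdot}$ and $a'_{j,\cdot}$, while ii) gives $Y_1\le Y'_1$ almost surely. An almost surely ordered coupling immediately yields the order: for each $s$ one has $\{Y'_1\le s\}\subseteq\{Y_1\le s\}$, so $\sum_{l=0}^{s}a_{i,l}\ge\sum_{l=0}^{s}a'_{j,l}$, i.e. $a_{i,\cdot}\preceq_{st}a'_{j,\cdot}$. Since $i\le j$ was arbitrary, $A\unlhd A'$ follows.

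The only delicate point is the interpretation of i). If one assumes merely that the marginal process $(Y_n)$ is Markov with transition $A$, then the conditional law of $Y_1$ given both coordinates $(Y_0,Y'_0)=(i,j)$ need not equal $a_{i,\cdot}$, and the converse would additionally require the joint chain to be lumpable in each coordinate. I would therefore state explicitly that i) refers to Markovianity with respect to the joint filtration, with the transition of each coordinate depending only on that coordinate --- the standard reading in coupling arguments and exactly what the quantile construction provides. Granting this, both directions reduce to the elementary equivalence between the stochastic order and a monotone coupling on a linearly ordered space, and the argument is complete.
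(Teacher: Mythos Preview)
Your proof is correct and follows essentially the same approach as the paper: both directions use the quantile coupling with a single uniform $U$ (exactly the paper's functions $\phi,\phi'$ in \eqref{genova}) for the forward implication, and the standard ``ordered coupling $\Rightarrow$ stochastic order'' argument for the converse. Your explicit discussion of the strong reading of condition i) --- Markovianity with respect to the joint filtration --- is a genuine clarification that the paper leaves implicit, and it is indeed the reading required both for the converse here and for the paper's subsequent use of the lemma in the proof of Lemma~\ref{mazzini}.
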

\begin{proof}\label{corr}
Assume $A \unlhd A' $. Let us define two functions $ \phi, \phi'  :
[0,1 ] \to E $ as follows
\begin{equation}\label{genova}
  \phi
(u) :=\inf\{r \in E : \sum_{l=0}^r a_{i,l} \geq u \}, \,\,\, \phi'
(u) :=\inf\{ r \in E : \sum_{l=0}^r a'_{i',l} \geq u \}.
\end{equation}
We now consider the two random variables $ Y_1= \phi (U) $ and  $
Y'_1=\phi' (U) $, where $U$ is a uniform r.v. over $[0,1]$. 
Let $ Y_0$, $ Y'_0$ be  two $E$-valued random variables  such that
any pair in $E^2 $ is taken with positive probability by $
(Y_0,Y'_0)$ and independent of $U$.

 Then,
conditionally on $ Y_0=i $ (resp. $ Y'_0=i' $), $Y_1 $ (resp. $
Y'_1$) has the law $a_{i, \cdot}$ (resp. $ a'_{i, \cdot}$).
Furthermore ii) holds in view of \eqref{genova}.

Viceversa, if i) and ii) hold, then \eqref{bezecca} follows by
definition of stochastic ordering.
\end{proof}

The relation $ \unlhd$ is maintained under products of transition
matrices. We provide a direct proof based on probabilistic
arguments.

\begin{lemma}\label{mazzini}
If $A$, $A'$, $B$, $B'$ are stochastic matrices of  order $k$ such
that $ A \unlhd A' $ and $ B\unlhd B'$ then $ AB \unlhd A' B'$. If
$(A_i)_{i =1, \ldots , n }$ and $(B_i)_{i =1, \ldots , n }$ are
    stochastic matrices such that $ A_i \unlhd B_i$, for $i=1, \ldots, n
    $, then $ A_1 A_2 \ldots A_n   \unlhd  B_1 B_2 \ldots B_n  $.  

\end{lemma}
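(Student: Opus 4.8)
The plan is to prove the two-matrix statement $AB \unlhd A'B'$ first, and then deduce the general product by a straightforward induction. Throughout I would rely on the probabilistic characterization of $\unlhd$ established in Lemma \ref{garibaldi}, which lets me replace the relation $A \unlhd A'$ by the existence of a one-step coupling that keeps the $A$-chain below the $A'$-chain whenever it starts below it. The whole argument is then a composition of two such couplings.

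First I would fix indices $i \leq i'$ in $E$ and build a two-stage monotone coupling. By Lemma \ref{garibaldi} applied to $A \unlhd A'$, starting from the ordered pair $(i,i')$ there is a pair $(Y, Y')$ with $Y \sim a_{i,\cdot}$, $Y' \sim a'_{i',\cdot}$ and $Y \leq Y'$ almost surely. Since the resulting states are again ordered, I can then apply Lemma \ref{garibaldi} to $B \unlhd B'$ starting from $(Y, Y')$, using fresh randomness independent of the first stage, to obtain a pair $(Z, Z')$ with $Z \leq Z'$ almost surely and conditional laws $Z \sim b_{Y,\cdot}$, $Z' \sim b'_{Y',\cdot}$. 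Conditioning on the first stage and summing then yields the correct marginals, namely $P(Z=\ell) = \sum_m a_{i,m} b_{m,\ell} = (AB)_{i,\ell}$ and likewise $Z' \sim (A'B')_{i',\cdot}$.

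At this point $(Z, Z')$ is a coupling of $(AB)_{i,\cdot}$ and $(A'B')_{i',\cdot}$ with $Z \leq Z'$ almost surely, so the \textquotedblleft viceversa\textquotedblright\ direction of Lemma \ref{garibaldi} gives $(AB)_{i,\cdot} \preceq_{st} (A'B')_{i',\cdot}$. Since $i \leq i'$ were arbitrary, this is exactly $AB \unlhd A'B'$ by Definition \ref{rtriangolo}. The one delicate point, and really the crux of the argument, is that the intermediate ordering $Y \leq Y'$ produced by the first coupling is precisely the hypothesis required to launch the second coupling while preserving the order; this chaining of ordered couplings is the concrete counterpart of the transitivity of $\unlhd$ noted in Remark \ref{remstochmon}.

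Finally, the statement for products of $n$ matrices follows by induction on $n$. The case $n=1$ is the hypothesis. For the inductive step I would set $A := A_1 \cdots A_{n-1}$ and $A' := B_1 \cdots B_{n-1}$, so that $A \unlhd A'$ by the induction hypothesis, and take $B := A_n$, $B' := B_n$ with $B \unlhd B'$ by assumption; the two-matrix case just proved then yields $A_1 \cdots A_n \unlhd B_1 \cdots B_n$. I expect no further obstacle here beyond the bookkeeping of the inductive step.
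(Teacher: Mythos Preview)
Your proposal is correct and follows essentially the same approach as the paper: compose two one-step monotone couplings (obtained from Lemma \ref{garibaldi}) to build a coupling of $(AB)_{i,\cdot}$ and $(A'B')_{i',\cdot}$ that preserves the order, then conclude via the reverse direction of Lemma \ref{garibaldi} and finish by induction. The only cosmetic difference is that the paper makes the couplings explicit via inverse-CDF constructions with two independent uniforms $U_1,U_2$, whereas you invoke Lemma \ref{garibaldi} abstractly; the substance is the same.
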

\begin{proof}\label{pr}
We want to construct two non-homogeneous Markov chains $ (Y_n)_{n
=0,1,2} $,  $ (Y'_n)_{n =0,1,2} $, with the following properties.
Let $ Y_0$, $ Y'_0$ be two $E$-valued random variables  such that
any pair in $E^2 $ is taken with positive probability by $
(Y_0,Y'_0)$.

Furthermore, the transition matrix
 of   $ (Y_n)_{n =0,1,2} $ is   $A$ for the first step and $B$
 for the second step. Analogously for $ (Y'_n)_{n =0,1,2}$ with $A'$ and $B'$.

On this purpose we consider two independent random variables  $U_1$
and $U_2$ uniformly distributed over $[0,1]$, also independent on
$(Y_0,Y'_0)$. Now we set, similarly to the proof of Lemma
\ref{garibaldi},
    \begin{equation}\label{genova2}
  Y_1
:=\inf\{r\in E: \sum_{l=0}^r a_{i,l} \geq U_1 \}, \,\,\, Y'_1
:=\inf\{ r \in E: \sum_{l=0}^r a'_{i',l} \geq U_1 \},
\end{equation}
 \begin{equation}\label{genova3}
  Y_2
 :=\inf\{r \in E: \sum_{l=0}^r b_{Y_1,l} \geq U_2 \}, \,\,\,
Y'_2  :=\inf\{ r \in E: \sum_{l=0}^r b'_{Y_1',l} \geq U_2 \}.
\end{equation}
Finally define $ X_0 =Y_0 $, $X_1 =Y_2$, $ X'_0 =Y'_0 $ and $ X'_1
=Y_2 $. The random variables  $ (X_n)_{n =0,1} $ form a Markov chain
with transition matrix
    $AB$; also $(X'_n)_{n =0,1}  $ is a Markov chain with transition matrix
    $A'B'$. The random pairs $(Z_n)_{n =0,1}$ defined by $
Z_n =(X_n, X'_n)$ can be seen as a Markov chain on the state space
$E^2$. In view of Lemma \ref{garibaldi}, the relation $ AB \unlhd A'
B'$ is proven by checking
 that $P( X_1 \leq X_1' | X_0=i , X'_0=i' )=1$, for $ i \leq i' \in E
 $. In this respect we have
\begin{equation}\label{primpas}
    P( X_1 \leq X_1' | X_0=i , X'_0=i' ) = P( Y_2 \leq Y_2' | Y_0=i ,
Y'_0=i' )=
\end{equation}
$$
=\sum_{ i_1 \leq i_1'} P(  Y_2 \leq Y_2' , Y_1 =i_1 ,Y'_1 =i'_1 |
Y_0=i , Y'_0=i' ).
$$
Notice that, in the last equality, we are allowed to reduce the sum
to $\{ (i_1 , i_1')\in E^2 : i_1 \leq i_1' \}$, in view of
\eqref{genova2}.

As to the r.h.s. of (\ref{primpas}) we obtain, by the Markov
property of $Z_n $,
\begin{equation}\label{quas}
\begin{array}{c}
  \sum_{ i_1 \leq i_1'} P(  Y_2 \leq Y_2' , Y_1 =i_1 ,Y'_1 =i'_1 |
Y_0=i , Y'_0=i' ) \\
  = \sum_{ i_1 \leq i_1'} P(  Y_2 \leq Y_2' | Y_1
=i_1 ,Y'_1 =i'_1 ) P(
  Y_1 =i_1 ,Y'_1 =i'_1 | Y_0=i , Y'_0=i'  ). \\
\end{array}
\end{equation}
We also have $ P(  Y_2 \leq Y_2' | Y_1 =i_1 ,Y'_1 =i'_1 ) =1 $, in
view of Lemma \ref{garibaldi}. Therefore the r.h.s. of \eqref{quas}
becomes $ P( Y_1  \leq Y'_1 | Y_0=i , Y'_0=i'  ) $.
 The latter term is equal to $1$ by \eqref{genova2} and this concludes the
 proof of the first part.
The second part is readily obtained from the first part by induction
and by taking into account associativity of the product of matrices.
\end{proof}

The next Theorem \ref{asintotica} has an immediate application to
our problem. It in fact provides an (apparently weaker) condition
sufficient for the hypothesis appearing in Theorem \ref{corord}.
Actually it gives an easily implementable condition to check the
asymptotic stochastic comparison between the hitting times for two
different Markov chains.

\begin{theorem}\label{asintotica}
 Let $\widetilde  P$ and $P$ be stochastic matrices of  order $k$. Assume that
 there exist two coprime integers $n_1$ and $n_2$ such that
 $$
\widetilde P^{n_1}\unlhd  P^{n_1} \hbox{ and }\widetilde
P^{n_2}\unlhd  P^{n_2},
 $$
 then
 \begin{equation}\label{tesi}
\widetilde P^{n}\unlhd   P^{n}
\end{equation}
for $n \geq  \hat n ( n_1, n_2) := inf \{ r: \forall  r' \geq r,
\,\,\,\,\,  r'= an_1 + b n_2 \hbox{ with } a,b \in \mathbb{N} \}$.
\end{theorem}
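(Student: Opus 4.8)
The plan is to combine the elementary number theory implicit in the definition of $\hat n(n_1,n_2)$ with the multiplicativity of the relation $\unlhd$ already established in Lemma \ref{mazzini}. The quantity $\hat n(n_1,n_2)$ is precisely the threshold of the Frobenius (Chicken McNugget) coin problem: since $n_1$ and $n_2$ are coprime, every sufficiently large integer admits a representation $an_1+bn_2$ with $a,b\in\mathbb{N}$, and $\hat n(n_1,n_2)$ is by definition the smallest $r$ beyond which all integers have such a representation. Thus, for each $n\geq \hat n(n_1,n_2)$ I may fix non-negative integers $a,b$ with $n=an_1+bn_2$; this is the only place where coprimality and the precise value of $\hat n$ enter the argument.

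With such a decomposition fixed, I would write
$$
\widetilde P^{n}=\bigl(\widetilde P^{n_1}\bigr)^{a}\bigl(\widetilde P^{n_2}\bigr)^{b},
\qquad
P^{n}=\bigl(P^{n_1}\bigr)^{a}\bigl(P^{n_2}\bigr)^{b},
$$
using that all these matrices are powers of $\widetilde P$ (respectively $P$) and hence commute, so that the exponents simply add. The idea is then to view each side as an ordered product of $a+b$ factors: the first $a$ factors equal to $\widetilde P^{n_1}$ (resp. $P^{n_1}$) and the remaining $b$ factors equal to $\widetilde P^{n_2}$ (resp. $P^{n_2}$).

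Now I would apply the second (inductive) part of Lemma \ref{mazzini}. Setting $A_i=\widetilde P^{n_1}$ and $B_i=P^{n_1}$ for $i=1,\ldots,a$, and $A_i=\widetilde P^{n_2}$, $B_i=P^{n_2}$ for $i=a+1,\ldots,a+b$, the two hypotheses $\widetilde P^{n_1}\unlhd P^{n_1}$ and $\widetilde P^{n_2}\unlhd P^{n_2}$ guarantee $A_i\unlhd B_i$ for every $i$. Lemma \ref{mazzini} then yields $A_1\cdots A_{a+b}\unlhd B_1\cdots B_{a+b}$, which is exactly $\widetilde P^{n}\unlhd P^{n}$, i.e. the desired conclusion \eqref{tesi}. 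The degenerate cases $a=0$ or $b=0$ are covered by the same argument, the product reducing to powers of a single factor.

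There is no serious obstacle here: essentially all of the work has already been done in Lemma \ref{mazzini}, and in particular in the fact that $\unlhd$ is preserved under products even though it fails to be reflexive. The only point deserving genuine care is that one must \emph{not} attempt to invoke a reflexivity-type statement such as $P^{n_1}\unlhd P^{n_1}$ (which need not hold, by Remark \ref{remstochmon}); the argument uses solely the two given relations $\widetilde P^{n_i}\unlhd P^{n_i}$, applied factor by factor along the decomposition $n=an_1+bn_2$.
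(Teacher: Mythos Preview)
Your argument is correct and follows exactly the same route as the paper: decompose $n=an_1+bn_2$ via the Frobenius-type definition of $\hat n(n_1,n_2)$, factor $\widetilde P^{n}$ and $P^{n}$ accordingly, and apply Lemma \ref{mazzini} term by term. Your added remarks on the coin problem and on avoiding any appeal to reflexivity of $\unlhd$ are accurate and only sharpen the exposition.
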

\begin{proof}\label{prasintotica}
For $ n \geq  \hat n ( n_1, n_2)  $ we can write, by definition of
$\hat n ( n_1, n_2) $, $ n = an_1 +b n_2$ for convenient natural
numbers $a$, $b$. Thus we can write $ P^n = (P^{n_1})^a  (P^{n_2})^b
$, $ \widetilde P^n = (\widetilde P^{n_1})^a  (\widetilde P^{n_2})^b
$. Then \eqref{tesi} is readily obtained by Lemma \ref{mazzini}.
\end{proof}
The number $ \hat n ( n_1, n_2)$ can be found using the Euclidean
algorithm. We notice that, in any case, $ \hat n ( n_1, n_2) \leq
n_1 n_2$.

 We already mentioned that the results of this section,
concerning the asymptotic stochastic ordering, can be used also to
check the usual stochastic ordering. A sufficient condition for $
T_k \preceq_{st} \widetilde{T}_k$ can be obtained as a simple
consequence of Theorem \ref{asintotica}.
\begin{proposition}\label{dueinsieme}
Let $\mathbf{X} =( X_n)_{n \in \mathbb{N}}$ and
$\mathbf{\widetilde{X}} =( \widetilde{X}_n)_{n \in \mathbb{N}}$ be
two Markov chains on $E_k =\{0, \ldots k\}$, both having initial
measure concentrated on zero, with transition matrices $ P$ and
$\widetilde{P}$ respectively. Let $k $ be an absorbing state for
both the matrices.

Assume that there exists two coprime $n_1$ and $n_2$ such that $
{\widetilde  P}^{n_1}\unlhd  P^{n_1} \hbox{ and } \widetilde
 P^{n_2}\unlhd
 P^{n_2}$, then
 \begin{itemize}
    \item[a.] $ T_k \preceq_{a.st.}
 \widetilde{T}_k$.
 \end{itemize}
 If, moreover, $ \widetilde{p}_{0,k}^{(n)} \leq  p_{0,k}^{(n)}$ for
 $n =1, \ldots , \hat n (n_1, n_2) -1$ then
 \begin{itemize}
    \item[b.] $ T_k \preceq_{st}
 \widetilde{T}_k$.
 \end{itemize}
\end{proposition}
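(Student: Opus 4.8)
The plan is to reduce both assertions to pointwise comparisons of the single entries $\widetilde{p}^{(n)}_{0,k}$ and $p^{(n)}_{0,k}$, and then to feed in the results already established in this section.

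For part a.\ I would simply chain two earlier theorems. Since $n_1$ and $n_2$ are coprime and $\widetilde P^{n_1}\unlhd P^{n_1}$, $\widetilde P^{n_2}\unlhd P^{n_2}$, Theorem \ref{asintotica} gives $\widetilde P^{n}\unlhd P^{n}$ for every $n\geq \hat n(n_1,n_2)$, in particular for all $n$ large enough. This is exactly the hypothesis of Theorem \ref{corord}, whose conclusion is $T_k\preceq_{a.st.}\widetilde T_k$. Thus part a.\ follows with no additional computation.

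For part b.\ I would use the identity recorded in the proof of Theorem \ref{corord}: because $k$ is absorbing and both chains start at $0$, one has $P(T_k>n)=1-p^{(n)}_{0,k}$ and $P(\widetilde T_k>n)=1-\widetilde p^{(n)}_{0,k}$ for every $n\in\mathbb{N}$. Consequently $T_k\preceq_{st}\widetilde T_k$ is \emph{equivalent} to the pointwise inequality $\widetilde p^{(n)}_{0,k}\leq p^{(n)}_{0,k}$ holding for \emph{all} $n$. I would then verify this inequality over three disjoint ranges of $n$. The case $n=0$ is trivial, both entries being $0$ since $k\neq 0$. For $1\leq n\leq \hat n(n_1,n_2)-1$ the inequality is precisely the extra hypothesis assumed in part b. For $n\geq \hat n(n_1,n_2)$, Theorem \ref{asintotica} yields $\widetilde P^{n}\unlhd P^{n}$; taking $i=j=0$ in Definition \ref{rtriangolo} gives $\widetilde p^{(n)}_{0,\cdot}\preceq_{st}p^{(n)}_{0,\cdot}$, and evaluating this stochastic order at the top level $k$ (the largest tail sum) yields $\widetilde p^{(n)}_{0,k}\leq p^{(n)}_{0,k}$, exactly as noted inside the proof of Theorem \ref{corord}.

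Assembling the three ranges establishes $\widetilde p^{(n)}_{0,k}\leq p^{(n)}_{0,k}$ for every $n$, hence $T_k\preceq_{st}\widetilde T_k$. I do not anticipate a substantive obstacle: the content is entirely in combining prior results, and the only point demanding care is the bookkeeping---confirming that Theorem \ref{asintotica} covers all $n\geq\hat n(n_1,n_2)$ while the supplementary hypothesis of part b.\ covers exactly the complementary finite range $\{1,\ldots,\hat n(n_1,n_2)-1\}$, so that no value of $n$ is left unbounded.
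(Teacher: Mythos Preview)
Your proof is correct and follows essentially the same route as the paper: both parts are reduced to the entry comparisons $\widetilde p^{(n)}_{0,k}\leq p^{(n)}_{0,k}$ via the absorbing-state identity, with Theorem~\ref{asintotica} handling all $n\geq\hat n(n_1,n_2)$ and the supplementary hypothesis covering the finitely many smaller~$n$. Your write-up is in fact slightly more explicit than the paper's (you spell out the $n=0$ case and how the $(0,k)$ entry inequality is read off from $\unlhd$), but the argument is the same.
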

\begin{proof}
The condition  $ T_k \preceq_{a.st.}
 \widetilde{T}_k$ is  equivalent to  $ \widetilde{p}^{(n)}_{0,k} \leq p^{(n)}_{0,k}$, for $n$ large enough,
(see also the  proof of Theorem \ref{corord}). We then obtain item
a. as a consequence of Theorem \ref{asintotica}. In fact the
          condition $ \widetilde P^{n}\unlhd  P^{n}   $ in particular
          implies $\widetilde p^{(n)}_{0,k} \leq {p}^{(n)}_{0,k}$.

          The condition $ T_k \preceq_{st} \widetilde{T}_k $ is
          equivalent to $ \widetilde{p}^{(n)}_{0,k} \leq p^{(n)}_{0,k}$, for $n =1,2, \ldots
          $.
          When the hypothesis of Theorem
          \ref{asintotica} holds, checking $T_k \preceq_{st} \widetilde{T}_k
          $ only requires that $ \widetilde{p}^{(n)}_{0,k} \leq {p}^{(n)}_{0,k}$,
          for $n =1,2, \ldots , \hat n (n_1, n_2) -1$, and then item b. is obtained.
\end{proof}
We notice that the conditions given in Theorem \ref{asintotica} can
be encountered rather often. A simple sufficient condition for its
hypothesis will be presented next. To this purpose we need the
following notation.

Given a stochastic matrix  $ P=(p_{i,j}: i,j =0, \ldots, k) $, such
that $ p_{i,k} < 1$ for $i =0, \ldots, k-1$, denote by $_{(k)} P =
(_{(k)} p_{i,j} : i,j =0, \ldots, k-1)$ the matrix obtained from $P$
by making $ k $ a taboo state: $_{(k)} p_{i,j} = p_{i,j}/(1-p_{i,k})
$. In view of our focusing on the hitting time in the state $k $, it
is not restrictive to assume  $ p_{k,k}=1 $.

Let $\mu = \mu (P) $ the modulus of the second eigenvalue of $P$ and
denote by $\lambda (P) $ the spectral gap of $P$ i.e.
\begin{equation}\label{spettri2}
\lambda (P) = 1 - \mu(P).
\end{equation}
Moreover, for a stochastic matrix $ P$, we use the term
\emph{ergodic} to designate the condition that there exists a
positive integer $ m $ such that all the elements of $ P^m $ are
strictly positive.

The eigenvalues of the transition matrix, admitting $k$ as an
absorbing state, determine the probability distribution of $T_k$,
under different types of conditions such as skip-free or
reversibility \cite{BrSh,DiaMic,Fill2,Fill1,Miclo,Zh}. Here we are
exclusively interested on the asymptotic stochastic ordering between
the hitting times for two different Markov chains. This restriction
allows us to focus attention on the second eigenvalue and to require
a mild condition (ergodicity of $_{(k)}P$), only.

\begin{theorem}\label{vicev}
    Let $ \widetilde{P}$ and $  P$ be two stochastic matrices on the
    state space
    $E =\{0 ,
\ldots , k\}$.
    Suppose that  $ \widetilde{p}_{k,k} =  p_{k,k} =1 $ and that
    $ _{(k)} \widetilde{P}$ 
    is ergodic. Assume
    furthermore that $\lambda (\widetilde{P}) < \lambda (  P)  $. Then
    there exists $n_0 $ such that $ \widetilde{P}^{n}\unlhd
    P^{n}$, for  $n \geq n_0 $.
\end{theorem}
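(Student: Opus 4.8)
The plan is to reduce the relation $\widetilde P^n \unlhd P^n$ to a comparison between the sub-stochastic ``taboo'' blocks of the two matrices, and then to control those blocks through their leading spectral behaviour. Since $k$ is absorbing for both matrices, I would first write them in block form
$$ P = \begin{pmatrix} Q & r \\ 0 & 1 \end{pmatrix}, \qquad \widetilde P = \begin{pmatrix} \widetilde Q & \widetilde r \\ 0 & 1 \end{pmatrix}, $$
where $Q,\widetilde Q$ are the $k\times k$ sub-stochastic matrices on $\{0,\dots,k-1\}$. The top-left $k\times k$ block of $P^n$ is then exactly $Q^n$, so $(P^n)_{i,l}=(Q^n)_{i,l}$ for $i,l\le k-1$, and likewise in the tilde case. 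The eigenvalues of $P$ are those of $Q$ together with $1$; since the hypothesis forces $\lambda(P)>0$, one has $\mu(P)=\rho(Q)=:\rho<1$, and similarly $\mu(\widetilde P)=\rho(\widetilde Q)=:\widetilde\rho$, where $\rho(\cdot)$ denotes spectral radius. The assumption $\lambda(\widetilde P)<\lambda(P)$ is thus exactly the strict inequality $\widetilde\rho>\rho$.

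Next I would rewrite the target. For probability vectors on the ordered space $\{0,\dots,k\}$, the order $\preceq_{st}$ is governed by complementary (lower) cumulative sums, so $(\widetilde P^n)_{i,\cdot}\preceq_{st}(P^n)_{j,\cdot}$ holds if and only if $\sum_{l=0}^{s-1}(P^n)_{j,l}\le\sum_{l=0}^{s-1}(\widetilde P^n)_{i,l}$ for every $s=1,\dots,k$. Using the block identity, and noting that the case $j=k$ is trivial (its left-hand side is $0$), the relation $\widetilde P^n\unlhd P^n$ becomes equivalent to
$$ \sum_{l=0}^{s-1}(Q^n)_{j,l}\ \le\ \sum_{l=0}^{s-1}(\widetilde Q^n)_{i,l}, \qquad 0\le i\le j\le k-1,\ \ 1\le s\le k. $$
This is the key reduction: the dominant mass of order $\widetilde\rho^{\,n}$ that both chains eventually dump on the absorbing state $k$ has cancelled out, leaving a clean comparison between lower partial sums of the two taboo blocks, one of order $\rho^{\,n}$ and the other of order $\widetilde\rho^{\,n}$.

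It then remains to estimate the two sides. For the left side, fixing any $\rho'\in(\rho,\widetilde\rho)$, Gelfand's formula gives $\|Q^n\|_\infty\le C(\rho')^n$, whence $\sum_{l=0}^{s-1}(Q^n)_{j,l}\le(Q^n\mathbf 1)_j\le C(\rho')^n$ uniformly in $j,s$; this uses only the spectral radius of $Q$ and no extra hypothesis on $P$. For the right side I would invoke ergodicity of $_{(k)}\widetilde P$: it is primitive, hence so is $\widetilde Q$ (same zero pattern, since $1-\widetilde p_{i,k}>0$), and Perron–Frobenius gives $\widetilde\rho^{-n}\widetilde Q^n\to v\,w^{\top}$ with strictly positive eigenvectors $v,w$. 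Because each index range $s\ge1$ includes the term $l=0$, one gets $\sum_{l=0}^{s-1}(\widetilde Q^n)_{i,l}\ge(\widetilde Q^n)_{i,0}\ge c_i\,\widetilde\rho^{\,n}$ for large $n$, with $c_i=\tfrac12 v_iw_0>0$. The ratio of left to right side is then at most $(C/c_i)(\rho'/\widetilde\rho)^n\to0$, and since only finitely many triples $(i,j,s)$ occur, a single threshold $n_0$ makes every inequality hold, yielding $\widetilde P^n\unlhd P^n$ for $n\ge n_0$.

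The step I expect to be most delicate is the lower bound on the right-hand side. The upper estimate for $Q^n$ is automatic from the spectral radius, but the comparison only works if the lower partial sums of $\widetilde Q^n$ are genuinely of order $\widetilde\rho^{\,n}$ — in particular strictly positive for all large $n$ and bounded below by a fixed positive multiple of $\widetilde\rho^{\,n}$, uniformly in $i$. This is exactly where primitivity (aperiodicity) of $_{(k)}\widetilde P$ is needed, and it explains why ergodicity is imposed only on the tilde matrix: $P$ enters the argument solely through an upper bound, whereas $\widetilde P$ must supply the matching lower one.
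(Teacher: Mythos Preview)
Your argument is correct and follows the same overall architecture as the paper's proof: reduce $\widetilde P^n\unlhd P^n$ to comparing the sub-stochastic taboo blocks, bound the $P$-side above by $C\mu(P)^n$ (up to subexponential factors), bound the $\widetilde P$-side below by $\widetilde c\,\mu(\widetilde P)^n$, and let the strict inequality of spectral gaps finish the job. The paper establishes exactly the pair of inequalities \eqref{uniformedis}--\eqref{uniformedis2} for the same purpose, and also treats the degenerate case $\lambda(\widetilde P)=0$ separately (your Perron--Frobenius limit handles it automatically).

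Where you differ is in the tooling. For the upper bound you invoke Gelfand's formula, whereas the paper writes out the Jordan form of $P$ and reads off the polynomial-times-geometric bound $n^kC\mu(P)^n$ from the nilpotent blocks. For the lower bound you appeal directly to the Perron--Frobenius limit $\widetilde\rho^{-n}\widetilde Q^n\to v\,w^\top$ for the primitive matrix $\widetilde Q$ and keep only the column $l=0$; the paper instead expands $\widetilde p_{i,j}^{(n)}$ via Jordan form, argues that at least one coefficient $(\widetilde A^{-1})_{\hat\imath,k-1}\widetilde A_{k-1,\hat\jmath}$ is nonzero (hence positive), and then bootstraps from the single pair $(\hat\imath,\hat\jmath)$ to all pairs using the ergodicity of $_{(k)}\widetilde P$ through the chain \eqref{termdomi}. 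Your route is shorter and avoids the Jordan machinery; the paper's route is more explicit about where each hypothesis enters but is heavier. Substantively the two proofs coincide.
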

\begin{proof} First we notice that $\lambda (  P)   $ is larger than zero.
This condition guarantees that, from each state $ i \in E $, the
Markov chain associated to $ P$ and starting in $0$ can reach the
state $k \in E$ in a finite number of steps, almost surely.
Actually, we will more precisely prove that there exists $  C>0 $
such that the following  inequality holds for any positive integer
$n$ and $i, j \in \{0 , \ldots , k-1\}$
\begin{equation}\label{uniformedis}
     p_{i, j}^{(n)} \leq  n^k C(1-\lambda ( P))^n .
\end{equation}
Then, by Borel-Cantelli Lemma,  the probability is zero of remaining
in $ E \setminus \{ k\}$ for an infinite number of steps.

 Concerning the matrix $\widetilde{P}$ we will prove, on the other hand,
that there exists $\widetilde{c}>0$ such that for $n $ large enough
\begin{equation}\label{uniformedis2}
    \widetilde{p}_{i, j}^{(n)} \geq \widetilde{c}(1- \lambda ( \widetilde{P}))^n  .
\end{equation}
If \eqref{uniformedis} and \eqref{uniformedis2} hold, we  get, for
$n$ large enough,  the inequalities
    $$
  \sum_{j=0}^{l} \widetilde{p}_{i, j}^{(n)} \geq  \sum_{j=0}^{l}  p_{\hat i, j}^{(n)}
    $$
    for any $l \in \{0, \ldots , k-1\}$ and any pairs $ i, \hat i \in \{0, \ldots , k-1\}$.
This, in particular, guarantees  $\widetilde{P}^{n}\unlhd P^{n}$ for
$n $ large enough and  concludes the proof.

   In order to get the
    inequality in \eqref{uniformedis} we can consider the Jordan
    representation
    $  P =  A^{-1} J  A  $
  for the stochastic matrix $ P$. In this representation
  we arrange in the increasing
  order the absolute values of eigenvalues on the main diagonal  and in particular $ ({J})_{k,k} =1$.

  Thus we can write $ |( J^n )_{i,j}|
  \leq n^k (1- \lambda ( P))^n$, for $ i,j  \in \{0, \ldots ,
  k-1\}$. By $  P^n =  A^{-1}  J^n  A $
  we obtain \eqref{uniformedis} in view of the assumption $ \lambda(  P) >0$.

In order to show \eqref{uniformedis2} we first notice that there is
only one eigenvalue
  of modulus $(1- \lambda ( \widetilde{P} ))$
as a consequence of the ergodicity of the transition matrix
$_{(k)}\widetilde{P}$. This is an easy consequence of
Perron-Frobenius theorem, see \cite{Sen}. We will denote by
$\widetilde{\mu}$ such an eigenvalue, which is a positive real
number (again as a consequence of  Perron-Frobenius theorem).

 We
start considering the case where $\lambda (\widetilde{P})
>0$. In such a
  case, similarly to above,  we use the Jordan representation of $\widetilde{P}$, i.e. $\widetilde{P}=
  \widetilde{A}^{-1} \widetilde{J} \widetilde{A}$.  We explicitly write
  \begin{equation}\label{sommat}
    \widetilde{p}_{i, j}^{(n)}= \sum_{l =0}^k \sum_{m =0}^k (\widetilde{A}^{-1}) _{i, l} (\widetilde{J}^n)_{l,m}
    (\widetilde{A})_{m,j} .
\end{equation}
To fix  ideas again we consider the case in which $
(\widetilde{J})_{k-1, k-1} = \widetilde{\mu} $ (the second highest
eigenvalue) and $(\widetilde{J})_{k,k}=1$. Furthermore, in
(\ref{sommat}), we are allowed to
limit attention only to indexes $0 \leq i \leq k-1 $. In fact, 
the terms $\widetilde{p}^{(n)}_{k,j}$ are zero for $j=0, \ldots ,
k-1$ and one for $j=k$.
From \eqref{sommat} we obtain
\begin{equation}\label{sommat2}
   \widetilde{p}_{i, k}^{(n)}=(\widetilde{A}^{-1})_{i,k} (\widetilde{A})_{k,k} + (\widetilde{A}^{-1}) _{i, k-1}
\widetilde{\mu}^n (\widetilde{A})_{k-1,k} + o( \widetilde{\mu}^n ),
\end{equation}
where $ (\widetilde{A}^{-1})_{i,k} (\widetilde{A})_{k,k}=1 $, for $i
=1, \ldots, k$, since the condition $ \lambda (\widetilde{P} )>0 $
guarantees $ \lim_{n \to \infty} \widetilde{p}_{i, k}^{(n)} =1$ and
$$
\lim_{n \to \infty} (\widetilde{A}^{-1}) _{i, k-1} \widetilde{\mu}^n
(\widetilde{A})_{k-1,k} + o( \widetilde{\mu}^n )=0 .
$$
We also obtain from (\ref{sommat}), for $ j =0, \ldots , k-1$,
\begin{equation}\label{sommat3}
   \widetilde{p}_{i, j}^{(n)}=(\widetilde{A}^{-1})_{i,k} (\widetilde{A})_{k,j} + (\widetilde{A}^{-1}) _{i, k-1}
\widetilde{\mu}^n (\widetilde{A})_{k-1,j} + o( \widetilde{\mu}^n ),
\end{equation}
but $ (\widetilde{A}^{-1})_{i,k} (\widetilde{A})_{k,j} =0 $ because
the limit $\lim_{n \to \infty} \widetilde{p}_{i, j}^{(n)} =0 $.

In this respect we claim that the products $
(\widetilde{A}^{-1})_{i, k-1} \widetilde{A}_{k-1, j} $ in
(\ref{sommat3}) can not be all equal to zero. In fact, if this were
the case, all the terms $ \widetilde{p}_{i, j}^{(n)} $ would not
depend on
$\widetilde{\mu} $ which is absurd. 
Therefore there exists $ \hat i , \hat j \in \{0, 1, \ldots , k-1\}$
such that $ (\widetilde{A}^{-1})_{\hat i, k-1} \widetilde{A}_{k-1,
\hat j } \not =0$. One can also deduce that $
(\widetilde{A}^{-1})_{\hat i, k-1} \widetilde{A}_{k-1, \hat j }  $
is a positive real number. In fact, from (\ref{sommat3}), one obtain
that
$$
\widetilde{p}_{\hat i, \hat j}^{(n)} = (\widetilde{A}^{-1})_{\hat i
, k-1} \widetilde{A}_{ k-1, \hat j } \widetilde{\mu} ^n + o
(\widetilde{\mu}^n ),
$$
with $\hat i , \hat j \in \{0, 1, \ldots , k-1\} $. Therefore
$(\widetilde{A}^{-1})_{\hat i , k-1} \widetilde{A}_{ k-1, \hat j }>0
$ because $ \widetilde{\mu}^n >0$ and $\widetilde{p}_{\hat i, \hat
j}^{(n)}>0 $. Furthermore, for $n $ large enough, $
\widetilde{p}_{\hat i, \hat j}^{(n)} > \hat c  \widetilde{\mu} ^n $,
where we let $\hat c : = \frac{1}{2} (\widetilde{A}^{-1})_{\hat i ,
k-1} \widetilde{A}_{ k-1, \hat j }$.

From the ergodicity of $_{(k) } \widetilde{P}$ we suppose that $n_0$
is a natural number such that the transition matrix $_{(k) }
\widetilde{P}^{n_0} $ has all the elements positive.

Taken a generic element $\widetilde{p}_{ i, j}^{(n)} $ with $ i,j
\in   \{0, 1, \ldots , k-1\}$ we obtain, for $n $ large enough,
\begin{equation}\label{termdomi}
 \widetilde{p}_{ i,  j}^{(n)}\geq \widetilde{p}_{ i,  \hat i}^{(n_0)}
 \widetilde{p}_{ \hat i,  \hat j}^{(n-2 n_0)}\widetilde{p}_{ \hat j ,
 j}^{(n_0)}\geq \widetilde{p}_{ i,  \hat i}^{(n_0)} \widetilde{p}_{ \hat j ,j}^{(n_0)}\hat c
 \widetilde{\mu}^{n-2n_0}
 \geq \left \{ \inf_{i,j \in \{ 0, \ldots, k-1\}} [ \widetilde{p}_{ i,  \hat i}^{(n_0)}
 \widetilde{p}_{ \hat j ,j}^{(n_0)} ] \hat c
 \widetilde{\mu}^{-2 n_0}\right \} \widetilde{\mu}^{n }= \widetilde{c}\,\, \widetilde{\mu}^{n },
\end{equation}
where  $ \widetilde{c} :=\inf_{i,j \in \{ 0, \ldots, k-1\}} [
\widetilde{p}_{ i, \hat i}^{(k_0)} \, \widetilde{p}_{ \hat j
,j}^{(k_0)} ] \hat c
 \widetilde{\mu}^{-2 k_0} $.

\medskip

We now consider the case  $\lambda(\widetilde{P}) =0$. This means
that the states $\{0, \ldots, k-1\}$ do not communicate with the
state $k$. Therefore there exists an invariant measure $ \pi
=(\pi_0, \pi_1, \ldots, \pi_{k-1}, 0) $ with $ \pi_i>0 $ for $ i =0,
\ldots, k-1$ (it is a consequence of the ergodicity of $
_{(k)}\widetilde{P} $ ). Therefore \eqref{uniformedis2} is trivially
satisfied. This ends the proof.

\end{proof}

The hypotheses of Theorem \ref{vicev} are met rather frequently and
then Proposition \ref{dueinsieme} provides an efficient tool to
check stochastic orderings between the hitting times of two Markov
chains on a same state space.

\begin{rem}\label{gd}
As a consequence of  Theorem \ref{vicev} we obtain, for a single
Markov chain  with  transition matrix $\widetilde{P}$ such that $
_{(k)}\widetilde{P} $ is ergodic, the large deviation equality $
\lim_{n \to \infty} \frac{1}{n}\ln (1-\widetilde{p}^{(n)}_{i,k}) =
\ln \widetilde{\mu} $ where $\widetilde{\mu} = 1 - \lambda
(\widetilde{P})$.
\end{rem}
\begin{rem}\label{gd22}
With obvious meaning of notation, consider the following conditions:
\begin{itemize}
    \item[a)] $ _{(k)}\widetilde{P} $ is ergodic and $\widetilde{\mu} <  \mu$ (as we
    have noticed in the proof of Theorem \ref{vicev} $\mu=\mu(P)$
    and $\widetilde\mu=\mu(\widetilde P)$ are positive real numbers when $ p_{k,k}=\widetilde  p_{k,k}=1  $ );
    \item[b)] There exists $  n_0 $ such that for all $ n >n_0 $   then $  \widetilde{P}^n
\unlhd  P^n$;
    \item[c)] $ T_k \preceq_{a.st.}  \widetilde{T}_k $.
\end{itemize}
By summarizing Theorem \ref{corord} and Theorem \ref{vicev}, we have
the implications $ a)\Rightarrow b) $ and $ b)\Rightarrow c) $.

We present two examples to show that the reverse implications fail.
First let us consider  the transition matrices
\begin{equation}\label{controesempi}
\widetilde{P}=  \left(%
\begin{array}{ccc}
  \alpha & (1-\alpha) & 0 \\
  0 & \beta &  (1-\beta) \\
  0 & 0 & 1 \\
\end{array}%
\right)   \hbox{ and }
 P = \left(%
\begin{array}{ccc}
   \beta &  (1-\beta) & 0\\
 0 & \alpha & (1-\alpha)  \\
  0 & 0 & 1 \\
\end{array}%
\right) ,
\end{equation}
with $ \alpha , \beta \in (0,1)$ and $ \alpha \neq \beta$. It is
immediately seen that both $ P$ and $\widetilde P $ have the
eigenvalues $\alpha$, $\beta$ and $1$. Moreover the hitting times $
T_2$ and $ \widetilde T_2$ have the same distribution. Therefore, in
particular, $ \widetilde{T}_2 \preceq_{a.st.}  T_2$ and $ {T}_2
\preceq_{a.st.}  \widetilde{T}_2$. In any case, for $n \in
\mathbb{N} $, the relations $ P^n \unlhd \widetilde P^n $ and $
\widetilde{P}^n \unlhd  P^n $ are not true. In fact
$\widetilde{p}_{0,0}^{(n)} = \alpha^n$,  $\widetilde{p}_{1,1}^{(n)}
= \beta^n$ while $p_{0,0}^{(n)} = \beta^n$, $p_{1,1}^{(n)} =
\alpha^n$.

We also notice that the two Markov chains, associated to the
transition matrices in (\ref{controesempi}), cannot be obtained one
from the other by simple permutations of the states. Therefore they
remain different also after changing the names of the states.

Thus we have obtained two different Markov chains with same
hitting-time distribution but such that the conditions  $
\widetilde{T}_2 \preceq_{a.st.}  T_2$ and $ {T}_2 \preceq_{a.st.}
\widetilde{T}_2$ cannot be detected by using the relation $ \unlhd$.
Whence  c) does not imply b).

\medskip

Secondly we show that item b) does not imply item a). Let us
consider the transition matrices
\begin{equation}\label{controesempi2}
\widetilde{P}=  \left(%
\begin{array}{ccc}
  \alpha & (1-\alpha) & 0 \\
  0 & \beta &  (1-\beta) \\
  0 & 0 & 1 \\
\end{array}%
\right)   \hbox{ and }
\hat P = \left(%
\begin{array}{ccc}
   \alpha &  (1-\alpha) & 0\\
 0 & \hat \beta & (1-\hat \beta)  \\
  0 & 0 & 1 \\
\end{array}%
\right) ,
\end{equation}
with $ \alpha , \beta , \hat \beta \in (0,1)$ and $\hat  \beta <
\beta < \alpha$. In both cases, the second largest eigenvalue is $
 \alpha $, therefore $ \widetilde{\mu} = \hat \mu = 1-\alpha $. In any case
 $  \widetilde{P} \unlhd \hat  P  $ and
therefore we obtain that $   \widetilde{P}^n \unlhd  \hat P^n$, for
any $n \in \mathbb{N}$. In particular we obtain that $  \hat T_2
\preceq_{a.st. }  \widetilde{T}_2 $. However item a), concerning the
two matrices $ \widetilde{P}$ and $ \hat{P}$, does not hold.
\end{rem}

We can thus conclude as follows: even if the relation $\unlhd $ may
appear very restrictive at a first glance, we notice however that it
provides a tool, to check $ T_k \preceq_{a.st.} \widetilde{T}_k $,
more frequently applicable than the comparison between the two
spectral gaps.

Moreover the use of $ \unlhd $ has the following advantages:
\begin{itemize}
    \item  As shown by Theorem \ref{asintotica} the use of the comparison $ \unlhd $  only requires the
    computation of powers of transition matrices and the latter operation is generally easier than computing
    eigenvalues.
    \item When the entries of the two transition matrices are all rational, the computation of powers
can be reduced to the case of integer entries, which allows one to
obtain easier and definite answers. We notice that simple cases of
matrices with rational entries are for example encountered in the
analysis of occurrences of words.
    \item The relation $ \unlhd $  may also be used to establish the condition
     $ T_k \preceq_{st} \widetilde{T}_k
          $ as shown by Proposition~\ref{dueinsieme}.
\end{itemize}
A few words about the comparison between the hypotheses of Theorem
\ref{vedere} and Theorem \ref{asintotica} are in order. The
condition $ \widetilde{P} \unlhd P $ implies (\ref{ordinatea}), in
Theorem \ref{p1}, which in turn is stronger than i) and ii) in
Theorem \ref{vedere}. However, as a consequence of Lemma
\ref{mazzini}, the hypothesis used in Theorem \ref{asintotica} is
much weaker than $ \widetilde{P} \unlhd P $.


\section{Comparisons for times of occurrences of words and leading numbers}\label{section3}

In this section we discuss some applications of the results of
Section \ref{sec2} and Section \ref{sec:asintotico}
in the frame of words occurrences. Let $\mathcal{A}_{N}\equiv
\{a_{1},\ldots ,a_{N}\}$ be the \textit{alphabet} composed by the
$N$ \textit{letters} $a_{1},\ldots,a_{N}$.\ \ An ordered sequence
$\mathbf{w}\equiv w_{1}w_{2}\ldots w_{k}$, where each of the
elements $w_{j}$ belongs to $\mathcal{A}_{N}$, is then seen as a
\textit{word} \textit{of length }$k$ \textit{on} $\mathcal{A}_{N}$.
We consider the space $\mathcal{A}_{N}^{k}$ of all possible words of
length $k$ on $\mathcal{A}_{N}$.

Assume that, at any instant $n=1,2,\ldots $, a letter is drawn at
random from  $\mathcal{A}_{N}$. Drawings are supposed to be
independent and
uniformly distributed over $\mathcal{A}_{N}$. We define the space $\Omega=\mathcal{A}_{N}%
^{\mathbb{N}}$; for
$\omega=(\omega_{1},\omega_{2},\ldots)\in\Omega$, we refer to
$\omega_{n}$ as \emph{the letter at time} $n\in\mathbb{N}$. The
probability measure on $\Omega$ is then the product measure that, at
any drawing, assigns probability $1/N$ to each letter: $
P(\omega_{n}=a)=\frac{1}{N},\,\,\,\,a\in\mathcal{A}_{N},\,\,\,n\in\mathbb{N}$.

 For any word $\mathbf{w}\equiv w_{1}w_{2}\ldots w_{k}$,
$\mathbf{w}\in \mathcal{A}_{N}^{k}$, we consider the stopping time
\[
T_{\mathbf{w}}:=\inf\{n\geq k|\omega_{n-k+1}=w_{1},\ldots
,\omega_{n}=w_{k}\},
\]
i.e. the random time until the first \emph{occurrence} of
$\mathbf{w}$.

This scheme gives also rise to an homogeneous Markov chain $\mathbf{X}%
=\{X_{n}\}_{n\in\mathbb{N}}$ with state space $E\equiv\{0,1,\ldots
,k\}$. The Markov chain $\mathbf{X}$ is defined as follows:

\begin{itemize}
\item[i)] $X_{0}=0$.

\item[ii)] For $n\geq1$ and $i\in\{1,\ldots,k\wedge n\}$, one has $X_{n}=i$ if

\begin{description}
\item[a.] $\omega_{n-i+1}\omega_{n-i+2}\ldots\omega_{n}=w_{1}w_{2}\ldots w_{i}
$

\item[b.] $\omega_{n-h+1}\omega_{n-h+2} \ldots\omega_{n}\neq w_{1}w_{2} \ldots
w_{h},\forall h=i+1,\ldots,k\wedge n; $
\end{description}

\item[iii)] One has $X_{n}=0$ if $\,\,\,\,\omega_{n-i+1}\omega_{n-i+2}%
\ldots\omega_{n}\neq w_{1}w_{2}\ldots w_{i}$ for all $1\leq i\leq
k\wedge n.$
\end{itemize}

\medskip
Under these positions, $T_{\mathbf{w}}$ coincides with the 
time $T_{k}$ of first visit to the state $k$ for the chain.
For our purposes we sometimes denote by $P^{(\mathbf{w})}=(p_{i,j}%
^{(\mathbf{w})})$ the transition matrix of such a Markov chain
associated to $\mathbf{w}$ $\in\mathcal{A}_{N}^{k}$ and denote by
$\mathcal{A}_{\mathbf{w} }
$ the alphabet formed by all the distinct letters belonging to
$\mathbf{w}$. The alphabet $\mathcal{A}_{\mathbf{w}}$ will be called
the \textit{minimal alphabet} of $\mathbf{w}$.

Furthermore, for $\mathbf{w}\equiv w_{1}w_{2}\ldots w_{k}$, we
denote by $\varepsilon_{\mathbf{w}}$ the \textit{leading number
}associated to\textit{ }$\mathbf{w}$. The latter is defined as the
binary vector
\begin{equation*}
    \varepsilon_{\mathbf{w}}:=(\varepsilon_{\mathbf{w}}\left(
1\right) ,\varepsilon_{\mathbf{w}}\left(  2\right)
,\ldots,\varepsilon_{\mathbf{w} }\left(  k\right)  )
\end{equation*}
where each $\varepsilon_{\mathbf{w}}\left(  u\right)  $ is equal to
$0$ or to $1$, according to the following position: for
$u=1,2,\ldots,k$
\begin{equation}\label{leadingnumber}
    \varepsilon_{\mathbf{w}}( u)
:=\mathbf{1}_{\{w_{k-u+1}=w_{1},\ldots,w_{k} =\ldots w_{u}\}}.
\end{equation}
Leading numbers have been introduced by J. Conway and have been
repeatedly used in the applied probability literature (see in
particular \cite{Li}, \cite{RD99} \cite{CZ1}), to deal with the
stochastic framework described above. In particular, the
distribution of $T_{\mathbf{w}}$ only depends on the leading number
$\varepsilon_{\mathbf{w}}$ and, as a function of it, the mean value
$\mathbb{E}(T_{\mathbf{w}})$ has the explicit expression $\mathbb{E}
(T_{\mathbf{w}})=\sum_{u=1}^{k}N^{u}\varepsilon_{\mathbf{w}}(u)$.

\bigskip

In what follows, we rather analyze stochastic comparisons between
the times $T_{\mathbf{w}}$ and $T_{\mathbf{w}^{\prime}}$ of
occurrences for two different words $\mathbf{w}$ and
$\mathbf{w}^{\prime}$ of the same length $k$. On this purpose we
apply the results of previous sections. We shall see furthermore
that an analysis based on the leading numbers $\varepsilon
_{\mathbf{w}}$ and $\varepsilon_{\mathbf{w}^{\prime}}$ can usefully
be combined with such results.

Let then $P_{\mathcal{A}}=P_{\mathcal{A}}^{(\mathbf{w})}$ and $P_{\mathcal{A}%
}^{\prime}=P_{\mathcal{A}}^{(\mathbf{w}^{\prime})}$ be the
transition matrices corresponding to the two words $\mathbf{w}$,
$\mathbf{w}^{\prime}$.

For several pairs $\mathbf{w}$, $\mathbf{w}^{\prime}$, it can happen
that $P_{\mathcal{A}}$ and $P_{\mathcal{A}}^{\prime}$\ satisfy a
condition of the type \eqref{ordinatea}. Theorem \ref{p1} then gives
us a useful criterion to check
$T_{\mathbf{w}}\preceq_{st}T_{\mathbf{w}^{\prime}}$.

The stochastic ordering $\preceq_{st}$ is a partial order on the
distributions of the times $T_{\mathbf{w}}$. As a first application
of Theorem \ref{p1} we now show that such a partial order does admit
a maximal element.

For $a \in\mathcal{A}_{N}$, let $\underline{\mathbf{a}}$ be the word
belonging to $\mathcal{A}_{N}^{k} $ and containing all letters equal
to $a$.

\begin{proposition}\label{pio2} For any word $\mathbf{w}\in\mathcal{A}%
_{N}^{k}$ and $\underline{\mathbf{a}}\in\mathcal{A}_{N}^{k}$, we
have $T_{\mathbf{w}} \preceq_{st} T_{\underline{\mathbf{a}}}$.
\end{proposition}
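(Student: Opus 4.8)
The plan is to apply Theorem \ref{p1} with $\mathbf{X}$ taken to be the occurrence chain of the arbitrary word $\mathbf{w}$ (transition matrix $P^{(\mathbf{w})}$) and $\widetilde{\mathbf{X}}$ the occurrence chain of the constant word $\underline{\mathbf{a}}=a\cdots a$ (transition matrix $P^{(\underline{\mathbf{a}})}$). Both chains are skip-free, as follows from the automaton description i)--iii), and both start at $0$, so the initial-measure hypothesis \eqref{orro} holds with equality. It therefore suffices to verify the row-wise domination $p^{(\mathbf{w})}_{i,\cdot}\succeq_{st} p^{(\underline{\mathbf{a}})}_{i,\cdot}$ for each $i=0,\ldots,k-1$, i.e. condition \eqref{ordinatea}; then \eqref{th} with $h=k$ reads precisely $T_{\mathbf{w}}\preceq_{st} T_{\underline{\mathbf{a}}}$.

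First I would describe the two transition matrices explicitly through the overlap (failure) structure encoded in i)--iii). From a state $i<k$, reading the letter $w_{i+1}$ (probability $1/N$) sends the chain to $i+1$, while reading any other letter $c\neq w_{i+1}$ sends it to the length of the longest proper suffix of $w_1\cdots w_i\,c$ that is still a prefix of $\mathbf{w}$, which is some state in $\{0,\ldots,i\}$. For the constant word the crucial observation is that a suffix ending in a letter $c\neq a$ can never be a prefix of $\underline{\mathbf{a}}$, so \emph{every} mismatch resets the chain to $0$; hence $p^{(\underline{\mathbf{a}})}_{i,i+1}=1/N$ and $p^{(\underline{\mathbf{a}})}_{i,0}=(N-1)/N$, all other entries being zero.

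The heart of the argument is then the comparison of the $i$-th rows. Both distributions place mass $1/N$ on the state $i+1$ and total mass $(N-1)/N$ on $\{0,\ldots,i\}$, differing only in how this latter mass is allocated: for $\underline{\mathbf{a}}$ it is concentrated at the minimal state $0$, whereas for $\mathbf{w}$ it is spread over $\{0,\ldots,i\}$. Consequently, for every threshold $s$ one has $\sum_{l=0}^{s} p^{(\mathbf{w})}_{i,l}\leq \sum_{l=0}^{s} p^{(\underline{\mathbf{a}})}_{i,l}$: when $s\leq i$ the right-hand side equals $(N-1)/N$ while the left-hand side is at most $(N-1)/N$, and when $s\geq i+1$ both equal $1$. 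This is exactly $p^{(\mathbf{w})}_{i,\cdot}\succeq_{st} p^{(\underline{\mathbf{a}})}_{i,\cdot}$.

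I do not expect a serious obstacle here; the only points needing care are the verification that a mismatch in $\underline{\mathbf{a}}$ always resets the automaton to $0$ and, more generally, that the failure transitions of $P^{(\mathbf{w})}$ never exceed level $i$ — both immediate from the definition of the states in i)--iii) together with the skip-free property $X_{n+1}\leq X_n+1$. Once the row domination is established, Theorem \ref{p1} yields the conclusion at once, and in fact gives the stronger statement $T_{\mathbf{w}}\preceq_{st} T_{\underline{\mathbf{a}}}$ for the hitting time of every level $h=1,\ldots,k$, not merely $h=k$.
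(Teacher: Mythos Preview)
Your proposal is correct and follows exactly the same approach as the paper: compute the transition matrix of $\underline{\mathbf{a}}$ (namely $p^{(\underline{\mathbf{a}})}_{i,i+1}=1/N$, $p^{(\underline{\mathbf{a}})}_{i,0}=(N-1)/N$) and then invoke Theorem~\ref{p1}. The paper's proof is much terser, leaving the verification of the row-wise stochastic comparison implicit, whereas you have spelled out in detail why $p^{(\mathbf{w})}_{i,\cdot}\succeq_{st} p^{(\underline{\mathbf{a}})}_{i,\cdot}$; your argument is sound and the extra care about where failure transitions land (always in $\{0,\ldots,i\}$) is appropriate.
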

\begin{proof}
The transition probabilities for the chain associated to $\underline
{\mathbf{a}}$ are given by
\[
p_{i,i+1}^{(\underline{\mathbf{a}})}=\frac{1}{N},\hbox{ }p_{i,0}%
^{(\underline{\mathbf{a}})}=1-\frac{1}{N},
\]
for $0\leq i\leq k-1$. We then see that the proof is immediately
obtained from Theorem~\ref{p1}.
\end{proof}

\medskip

Under a simple condition, the following result shows that also a
minimal element does exist w.r.t. $\preceq_{st}$. For $N\geq k$, let
$\mathbf{\overline{w}}$ be the word $a_{1}a_{2}\ldots a_{k}$, made
with the first letters of the alphabet.

\begin{proposition}\label{p3} Let $N\geq k$. For any word $\mathbf{w}%
\in\mathcal{A}_{N}^{k}$ and for
$\mathbf{\overline{w}}\in\mathcal{A}_{N}^{k}$, we have
$T_{\mathbf{w}}\succeq_{st}T_{\mathbf{\overline{w}}}$.
\end{proposition}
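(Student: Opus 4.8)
The plan is to bypass Theorem \ref{p1} here: a state-by-state comparison of the two transition matrices fails in general. Indeed, from a state $i\ge 1$ the chain associated to $\overline{\mathbf{w}}=a_{1}\cdots a_{k}$ restarts in state $1$ as soon as the letter $a_{1}$ is read, whereas a word $\mathbf{w}$ with self-overlaps may, upon failure, fall back to a higher intermediate state; hence the one-step laws of $\overline{\mathbf{w}}$ need not dominate those of $\mathbf{w}$ (e.g. $\mathbf{w}=aab$ against $\overline{\mathbf{w}}=abc$). Instead I would use the fact, recalled in this section, that the law of $T_{\mathbf{w}}$ depends on $\mathbf{w}$ only through the leading number $\varepsilon_{\mathbf{w}}$, and prove that $T_{\mathbf{w}}$ is monotone, in the $\preceq_{st}$ sense, with respect to the componentwise order of leading numbers. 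Since $\overline{\mathbf{w}}$ has $k$ distinct letters it admits no nontrivial border, so $\varepsilon_{\overline{\mathbf{w}}}(u)=0$ for $u<k$ and $\varepsilon_{\overline{\mathbf{w}}}(k)=1$; thus $\varepsilon_{\overline{\mathbf{w}}}(u)\le\varepsilon_{\mathbf{w}}(u)$ for every word $\mathbf{w}$, and $\overline{\mathbf{w}}$ is the componentwise minimal leading number.

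First I would record the probability generating function of $T_{\mathbf{w}}$ in terms of $\varepsilon_{\mathbf{w}}$. Writing $p=1/N$ and $D_{\mathbf{w}}(s):=\sum_{u=1}^{k}\varepsilon_{\mathbf{w}}(u)\,p^{\,k-u}s^{\,k-u}$, the standard renewal (correlation) identity for occurrences in an i.i.d.\ sequence, relating first occurrences to the self-overlap structure encoded by $\varepsilon_{\mathbf{w}}$, yields
\[
F_{\mathbf{w}}(s):=\mathbb{E}\!\left(s^{T_{\mathbf{w}}}\right)=\frac{p^{k}s^{k}}{(1-s)\,D_{\mathbf{w}}(s)+p^{k}s^{k}} .
\]
As a check, $\lim_{s\to1}\frac{1-F_{\mathbf{w}}(s)}{1-s}=D_{\mathbf{w}}(1)/p^{k}=\sum_{u}N^{u}\varepsilon_{\mathbf{w}}(u)$ recovers the mean stated above. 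In particular $D_{\overline{\mathbf{w}}}(s)=1$, while for a general word $D_{\mathbf{w}}(s)-1$ is a polynomial with nonnegative coefficients, by the componentwise minimality noted above.

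Next I would compare the survival generating functions $H_{\mathbf{w}}(s):=\sum_{n\ge0}\mathbb{P}(T_{\mathbf{w}}>n)s^{n}=\frac{1-F_{\mathbf{w}}(s)}{1-s}=\frac{D_{\mathbf{w}}(s)}{R_{\mathbf{w}}(s)}$, where $R_{\mathbf{w}}(s):=(1-s)D_{\mathbf{w}}(s)+p^{k}s^{k}$ and $Q(s):=R_{\overline{\mathbf{w}}}(s)=1-s+p^{k}s^{k}$. A direct cancellation gives
\[
H_{\mathbf{w}}(s)-H_{\overline{\mathbf{w}}}(s)=\frac{p^{k}s^{k}\bigl(D_{\mathbf{w}}(s)-1\bigr)}{R_{\mathbf{w}}(s)\,Q(s)} .
\]
The conclusion $T_{\overline{\mathbf{w}}}\preceq_{st}T_{\mathbf{w}}$ is equivalent to $\mathbb{P}(T_{\overline{\mathbf{w}}}>n)\le\mathbb{P}(T_{\mathbf{w}}>n)$ for all $n$, i.e.\ to the right-hand side having nonnegative power-series coefficients. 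The numerator already does, so it suffices to prove the same for $1/(R_{\mathbf{w}}Q)$. The decisive observation is that $\frac{1}{R_{\mathbf{w}}(s)}=F_{\mathbf{w}}(s)/(p^{k}s^{k})=p^{-k}\sum_{n\ge k}\mathbb{P}(T_{\mathbf{w}}=n)\,s^{\,n-k}$, and likewise $\frac{1}{Q(s)}=p^{-k}\sum_{n\ge k}\mathbb{P}(T_{\overline{\mathbf{w}}}=n)\,s^{\,n-k}$, are rescaled shifts of genuine probability generating functions (recall $T\ge k$ almost surely); hence both have nonnegative coefficients, and so does their product.

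I expect the main work to be the careful justification of the generating-function identity for $F_{\mathbf{w}}$, namely the correlation equation $\sum_{n\ge k}\mathbb{P}(Y_{n}=1)\,s^{n}=F_{\mathbf{w}}(s)\,C_{\mathbf{w}}(s)$ linking the indicators $Y_{n}$ of an occurrence ending at $n$ to the overlap generating function $C_{\mathbf{w}}(s)=D_{\mathbf{w}}(s)+p^{k}s^{k}/(1-s)$; this I would either derive or quote from the leading-number literature already cited here (see \cite{Li,CZ1}). The conceptually important point is the recognition that $1/R_{\mathbf{w}}$ and $1/Q$ are rescaled probability generating functions, which upgrades the easy pointwise inequality $H_{\mathbf{w}}(s)\ge H_{\overline{\mathbf{w}}}(s)$ on $[0,1)$ to the coefficientwise domination that the usual stochastic order actually demands. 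The hypothesis $N\ge k$ enters only to ensure that $\overline{\mathbf{w}}=a_{1}\cdots a_{k}$ is a legitimate word with $k$ distinct letters.
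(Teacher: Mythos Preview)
Your argument is correct, but your opening premise---that Theorem~\ref{p1} must be bypassed---is not what the paper does. The paper \emph{does} apply Theorem~\ref{p1}, just not to the pair $(\mathbf{w},\overline{\mathbf{w}})$ directly. Instead it introduces the auxiliary word $\widetilde{\mathbf{w}}=w_{1}\cdots w_{k-1}a_{N}$, obtained by replacing only the last letter of $\mathbf{w}$ by a fresh letter $a_{N}\notin\mathcal{A}_{\mathbf{w}}$ (which exists because a nontrivial border forces a repeated letter, so $|\mathcal{A}_{\mathbf{w}}|\le k-1<N$). Then $\varepsilon_{\widetilde{\mathbf{w}}}=(0,\ldots,0,1)=\varepsilon_{\overline{\mathbf{w}}}$, so $T_{\widetilde{\mathbf{w}}}\sim T_{\overline{\mathbf{w}}}$, and the transition matrices of $\mathbf{w}$ and $\widetilde{\mathbf{w}}$ agree on rows $0,\ldots,k-2$ and satisfy the row-wise domination \eqref{ordinatea} on row $k-1$; Theorem~\ref{p1} then yields $T_{\widetilde{\mathbf{w}}}\preceq_{st}T_{\mathbf{w}}$.

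Your generating-function route is genuinely different and, in one respect, stronger. The key identity $H_{\mathbf{w}}-H_{\overline{\mathbf{w}}}=p^{k}s^{k}(D_{\mathbf{w}}-1)/\bigl(R_{\mathbf{w}}Q\bigr)$ together with the observation that $1/R_{\mathbf{w}}$ and $1/Q$ are shifted, rescaled p.g.f.'s (hence coefficientwise nonnegative) is a clean analytic proof that requires no coupling. Moreover, the same cancellation gives $H_{\mathbf{w}'}-H_{\mathbf{w}}=p^{k}s^{k}(D_{\mathbf{w}'}-D_{\mathbf{w}})/\bigl(R_{\mathbf{w}}R_{\mathbf{w}'}\bigr)$, so your method immediately yields full $\preceq_{st}$-monotonicity in the componentwise order of leading numbers, not merely minimality of $(0,\ldots,0,1)$; and it does so for every $N\ge2$, the hypothesis $N\ge k$ being needed only so that $\overline{\mathbf{w}}$ exists. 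The paper's approach, by contrast, stays within the Markov-chain comparison framework developed in Section~\ref{sec2} and illustrates how the leading-number equivalence can be combined with Theorem~\ref{p1}; it is less general but thematically aligned with the coupling results. Your reliance on the correlation identity for $F_{\mathbf{w}}$ is legitimate---it is classical and derivable along the lines you sketch---so the only caveat is that you should state it precisely rather than leave it as a citation.
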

\begin{proof}
If the leading number associated to the word
$\mathbf{w}=w_{1}w_{2}\ldots w_{k-1}w_{k}$ is $(0,0,\ldots,0,1)$,
then there is nothing to prove because the distributions of
$T_{\mathbf{w}}$ and $T_{\mathbf{\overline{w}}}$ are equal. Suppose
then that the leading number of the word $\mathbf{w}$ contains more
than only one $1$. In such a case $\mathbf{w}$ has a repetition of
at least one letter then $ \mathcal{A}_{\mathbf{w}}  $ is strictly
contained in $\mathcal{A}_N$, in view of the condition $N\geq k$.
Let for instance be $a_N \notin \mathcal{A}_{\mathbf{w}}$, say. Let
us consider the word $\mathbf{\widetilde{w}}=w_{1}w_{2}\ldots
w_{k-1}a_{N}$. It is clear that the leading number associated with
the word $\mathbf{\widetilde{w}}$ is
$(0,0,\ldots,0,1)$, see (\ref{leadingnumber}). Therefore the distribution of $T_{{\overline{\mathbf{w}}}%
}$ is the same as the one of $T_{{\widetilde{\mathbf{w}}}}.$ Hence,
in order
to show the stochastic comparison $T_{\mathbf{w}}\succeq_{st}%
T_{\mathbf{\overline{w}}}$, we prove $T_{\mathbf{w}}\succeq_{st}%
T_{\mathbf{\widetilde{w}}}$. The associated Markov chains are easy
to analyze because for $i=0,\ldots,k-2$ and $l=0,\ldots,k$ the
transition probabilities verify
\[
p_{i,l}^{(\mathbf{w})}=p_{i,l}^{(\mathbf{\widetilde{w}})}.
\]
As far as the transitions from the state $k-1$ are concerned, we
notice that for the index $\bar{l}=\max\{l=1,
\ldots,k-1:\varepsilon_{\mathbf{w}}(l)=1\}$ we can write $
p_{k-1,\bar{l}}^{(\mathbf{w})} =0$, since, by (\ref{leadingnumber}),
it must be $ w_{\bar{l}} =w_k $. On the other hand
$$
p_{k-1,\bar{l}}^{(\mathbf{\widetilde{w}}%
)}=\frac{1}{N}.
$$
In fact, when the Markov chain associated to $ \widetilde{w}$ is in
the state $k-1$, it has a transition toward the state $\bar{l}$ if
and only if  the letter $w_k$ is drawn. Moreover
\[
p_{k-1,l}^{(\mathbf{w})}=p_{k-1,l}^{(\mathbf{\widetilde{w}})},
\]
for $l\neq0,\bar{l}$. Therefore
\[
p_{k-1,0}^{(\mathbf{w})}=p_{k-1,0}^{(\mathbf{\widetilde{w}})}+\frac{1}{N}.
\]
We then see that the proof is immediately obtained from
Theorem~\ref{p1}.

\bigskip
\end{proof}

 \begin{rem}\label{cambioalfabeto} A same string $\mathbf{w}\equiv
w_{1}w_{2}\ldots w_{k}$ can be\ seen as a word on different
alphabets, and we must keep in mind which is the alphabet
$\mathcal{A}_{N}$ from which the random letters
$\omega_{1},\omega_{2},\ldots$ are drawn. Normally, such an alphabet
does not coincide with the minimal alphabet
$\mathcal{A}_{\mathbf{w}}$. The probability distribution of
$T_{\mathbf{w}}$ depends on $\mathbf{w}$ only through the leading
number $\varepsilon_{\mathbf{w}}$ and it depends on the alphabet
$\mathcal{A}_{N}$ only through its cardinality $N$. For brevity's
sake, such dependence on $N$ is omitted in our notation; however it
cannot be neglected, generally.
\end{rem}

In applying Theorem \ref{p1} to word occurrences, the following
proposition can be of interest.

\begin{proposition} \label{duestati} Let condition \eqref{ordinatea} hold for
$P_{\mathcal{A}} $ and $P_{\mathcal{A}}^{\prime}$. Then condition
\eqref{ordinatea} also holds for $P_{\hat{ \mathcal{A}} }$ and
$P_{\hat {\mathcal{A}}}^{\prime}$ for any alphabet $\hat{
\mathcal{A}} \supset \mathcal{A}_{\mathbf{w}}
\cup\mathcal{A}_{\mathbf{w}^{\prime}}$.
\end{proposition}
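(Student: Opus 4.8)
The plan is to show that condition \eqref{ordinatea}, once expressed through the tail sums that characterize the stochastic order $\succeq_{st}$, becomes a statement not involving the cardinality of the alphabet at all, but only the combinatorial structure of $\mathbf{w}$ and $\mathbf{w}'$. Since a word can only be drawn over an alphabet containing its letters, both $\mathcal{A}$ and $\hat{\mathcal{A}}$ contain $\mathcal{A}_{\mathbf{w}}\cup\mathcal{A}_{\mathbf{w}'}$; the validity of the condition will then transfer verbatim from one alphabet to the other.

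First I would record how enlarging the alphabet affects the transition matrix. Fix a state $i\in\{0,\ldots,k-1\}$ of the chain associated to $\mathbf{w}$, so that the last $i$ drawn letters coincide with $w_{1}\ldots w_{i}$. If the next drawn letter $a$ does not belong to the minimal alphabet $\mathcal{A}_{\mathbf{w}}$, then no nonempty prefix of $\mathbf{w}$ can be a suffix of $w_{1}\ldots w_{i}a$ (such a suffix would end in $a$, forcing some $w_{j}=a$, which is impossible), and hence the chain returns to state $0$. Consequently, for any alphabet $\mathcal{A}\supseteq\mathcal{A}_{\mathbf{w}}$ with $|\mathcal{A}|=N$, the transition probabilities toward a positive state $j\geq1$ take the form
$$p_{i,j}^{(\mathbf{w})}=\frac{c_{i,j}^{(\mathbf{w})}}{N},\qquad j=1,\ldots,k,$$
where $c_{i,j}^{(\mathbf{w})}$ is the number of letters $a$ in $\mathcal{A}_{\mathbf{w}}$ whose drawing moves the chain from $i$ to $j$, an integer depending only on $\mathbf{w}$ and not on $N$. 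The entire dependence on the alphabet is absorbed by the return probability $p_{i,0}^{(\mathbf{w})}=1-N^{-1}\sum_{j\geq1}c_{i,j}^{(\mathbf{w})}$, which merely collects all the extra letters. The same description holds for $\mathbf{w}'$ with counts $c_{i,j}^{(\mathbf{w}')}$.

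Next I would rewrite \eqref{ordinatea} in tail-sum form: for each $i$, the relation $p_{i,\cdot}^{(\mathbf{w})}\succeq_{st}p_{i,\cdot}^{(\mathbf{w}')}$ is equivalent to $\sum_{l=m}^{k}p_{i,l}^{(\mathbf{w})}\geq\sum_{l=m}^{k}p_{i,l}^{(\mathbf{w}')}$ for every $m=1,\ldots,k$ (the case $m=0$ being the trivial identity $1\geq1$). Crucially these tail sums run over positive states only, so by the previous step they equal $N^{-1}\sum_{l=m}^{k}c_{i,l}^{(\mathbf{w})}$ and $N^{-1}\sum_{l=m}^{k}c_{i,l}^{(\mathbf{w}')}$. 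The common factor $N^{-1}$ cancels, and \eqref{ordinatea} for $P_{\mathcal{A}},P_{\mathcal{A}}^{\prime}$ is seen to be equivalent to the purely combinatorial inequalities
$$\sum_{l=m}^{k}c_{i,l}^{(\mathbf{w})}\geq\sum_{l=m}^{k}c_{i,l}^{(\mathbf{w}')},\qquad i=0,\ldots,k-1,\ \ m=1,\ldots,k,$$
in which the alphabet no longer appears.

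Finally, since the counts $c_{i,j}^{(\mathbf{w})}$ and $c_{i,j}^{(\mathbf{w}')}$ depend only on $\mathcal{A}_{\mathbf{w}}$ and $\mathcal{A}_{\mathbf{w}'}$, and both $\mathcal{A}$ and $\hat{\mathcal{A}}$ contain $\mathcal{A}_{\mathbf{w}}\cup\mathcal{A}_{\mathbf{w}'}$, the displayed inequalities are literally the same for the two alphabets. Hence they hold for $\hat{\mathcal{A}}$ as soon as they hold for $\mathcal{A}$, and reversing the tail-sum reduction yields \eqref{ordinatea} for $P_{\hat{\mathcal{A}}}$ and $P_{\hat{\mathcal{A}}}^{\prime}$. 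The one delicate point I would state carefully is the verification that a letter outside the minimal alphabet forces a return to $0$; everything else is the bookkeeping that isolates the factor $N^{-1}$ and confirms that the relevant tail sums avoid the single alphabet-dependent entry $p_{i,0}$.
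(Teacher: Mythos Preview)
Your proof is correct and follows essentially the same approach as the paper: both rewrite condition \eqref{ordinatea} as the tail-sum inequalities $\sum_{l\geq m}p_{i,l}\geq\sum_{l\geq m}p'_{i,l}$ for $m\geq 1$, observe that for positive target states the transition probabilities scale like $1/N$ (the paper phrases this as $p_{i,j}\in\{0,1/N\}$ for $j\geq 1$, you phrase it via counts $c_{i,j}^{(\mathbf{w})}/N$), and conclude that the common factor makes the inequalities independent of the alphabet size. Your version is more explicit about why letters outside $\mathcal{A}_{\mathbf{w}}$ force a return to $0$, but the argument is the same.
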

\begin{proof}
First we notice that \eqref{ordinatea} reads
\begin{equation}
\sum_{l=j}^{k}p_{i,l}\geq\sum_{l=j}^{k}p_{i,l}^{\prime}, \label{confr}%
\end{equation}
for $i=0,\ldots,k$ and $j=1,\ldots,k$.  Furthermore, for $j \geq 1$,
$p_{i,j}$, $p_{i,j}^{\prime}$ are equal to $1/N$ when are not null,
$N$ being the cardinality of ${\mathcal{A}}$.
 When the
alphabet ${\mathcal{A}}$ is replaced by the alphabet
$\hat{\mathcal{A}}$ then each $p_{i,l}$, with $l>0$, is replaced by
$p_{i,l}N/\hat{N}$ where $\hat{N}$ denotes the cardinality of
$\hat{\mathcal{A}}$. Then all the inequalities in \eqref{confr} are
maintained.
\end{proof}

\bigskip

Proposition \ref{duestati} guarantees the following property: once
we have proved the inequality
$T_{\mathbf{w}}\preceq_{st}T_{\mathbf{w}^{\prime}}$ by checking the
condition \eqref{ordinatea} for a sampling alphabet, then not only
$T_{\mathbf{w}}\preceq_{st}T_{\mathbf{w}^{\prime}}$ holds for any
other compatible alphabet, but also this conclusion stands still on
the comparison \eqref{ordinatea}.

\bigskip

In some cases Theorem~\ref{serve} can be used to compare two words
$\mathbf{w}$, $\widetilde{\mathbf{w}}$ which cannot be compared by
means of Theorem~\ref{p1}. An example follows.

\begin{example}
\label{exserve} Let $\mathbf{w}'=(A,A,B,A,A)$ and ${\mathbf{w}
}=(A,B,B,B,A)$ be seen as words on an alphabet with $N\geq5$. First
we notice that, for these two words, the hypothesis of Theorem
\ref{serve} holds true with $m=4$. On the other hand,
 by using Proposition \ref{p3}, one obtains
$T_{4}\preceq _{st}{T}'_{4}$. In fact the sub-word $ (A,B,B,B)$ made
with the first four letters of ${\mathbf{w} }$ has the leading
number $(0,0,0,1)$.

Furthermore condition (ii) of Theorem \ref{serve} is also satisfied.
Then we obtain that $T_{\mathbf{w}}\preceq_{st}T'_{ {\mathbf{w}}}$.
Notice that this example can be easily generalized by adding a same
number $\nu$ of letters $A$ on the left and on the right of the two
words and by adding a number $\mu$ of letters $B$ in the center. In
other terms we are saying that, by means of Theorem \ref{serve} and
Proposition \ref{p3}, one can compare two words $\mathbf{w}'$ and
${\mathbf{w}}$ whose leading numbers have the form
\[
\varepsilon_{{\mathbf{w}}}(i)=1,\hbox{ for }i=1,\ldots ,h;\hbox{ and
}\varepsilon_{{\mathbf{w}}} (i)=0,\hbox{ for }i=h+1,\ldots k-1,
\]
\[
\varepsilon_{\mathbf{w}'}(i)=1,\hbox{ for }i=1,\ldots ,h+1;\hbox{
and }\varepsilon_{\mathbf{w}'}(i)=0,\hbox{ for }i=h+2,\ldots k-1,
\]
with $N\geq k\geq2(h+1)$.
\end{example}

\comment{ 
\textbf{Remark 2} The circumstance that, for fixed cardinality of
the sampling alphabet, the probability distributions of
$T_{\mathbf{w}}$ only depends on $\varepsilon_{\mathbf{w}}$ has
important applications in the present framework. In order to check
$T_{\mathbf{w}}\preceq_{st}T_{\widetilde {\mathbf{w}}}$, we can look
for a different pair $\left(  \mathbf{w}^{\prime
},\widetilde{\mathbf{w}}^{\prime}\right)  $ such that $\varepsilon
_{\mathbf{w}}=\varepsilon_{\mathbf{w}^{\prime}}$,
$\varepsilon_{\widetilde
{\mathbf{w}}}=\varepsilon_{\widetilde{\mathbf{w}}^{\prime}}$, and
the
comparison $T_{\mathbf{w}^{\prime}}\preceq_{st}T_{\widetilde{\mathbf{w}%
}^{\prime}}$ can be directly established by using one of the results
presented above. This argument, combined with the above Remark 1,
also shows the interest of checking the possibility, for a given
word $\mathbf{w}$, that a different word $\mathbf{w}^{\prime}$
exists such that $\varepsilon
_{\mathbf{w}}=\varepsilon_{\mathbf{w}^{\prime}}$ and $\mathcal{A}%
_{\mathbf{w}^{\prime}}\subset\mathcal{A}_{\mathbf{w}}$.

\begin{proof}
\bigskip
\end{proof}

}

As noticed in  Remark \ref{remstochmon} of the previous section, the
condition $P' \trianglelefteq P$ between two stochastic matrices is
stronger than \eqref{ordinatea}, however in order to guarantee
 asymptotic comparisons,  we only need  $P'^n
\trianglelefteq P^n$, for all $n$ large enough.

Such a condition is not at all so strong. Actually we checked a
large number of pairs of words $ \mathbf{w}', \mathbf{w}$ with
$\mathbb{E}(T_{\mathbf{w}})< \mathbb{E} (T_{{\mathbf{w}'}})$ and
$_{(k)} {P'}$ ergodic, and in every case we found that $P'^n
\trianglelefteq P^n$, for all $n$ large enough. We notice that for a
transition matrix $P$ associated to a word $\mathbf{w}$, the
condition $_{(k)} {P'}$ ergodic just means that the state $0$ is
reachable from the state $k-1$. This condition certainly holds when
the alphabet $\mathcal{A}$ contains the minimal alphabet associated
to $\mathbf{w}$, strictly.


\begin{example}\label{parolefinale}
We used a simple program to compute powers of matrices and  compare
them in the sense $\unlhd$. In particular we compared  the
 matrices $P^{(\mathbf{w})}_\mathcal{A}$ and
$P^{(\mathbf{w'})}_\mathcal{A}$, for several pairs of words
$\mathbf{w}$ and $\mathbf{w'} $ on a same alphabet $ \mathcal{A}$.
Generally we have been able to check $ T_\mathbf{w} \preceq_{st}
T_\mathbf{w'} $ by finding $n_1$ and $n_2$ as in Theorem
\ref{asintotica} and by applying Proposition~\ref{dueinsieme}. For
instance for $ \mathbf{w}=(A,B,A,B,A)$ and $\mathbf{w'} =(
A,B,A,A,B) $, considered as two words on the binary alphabet, we
found  $ n_1 =3 $ and $n_2 =4 $. So that $\hat n =6$ and the
condition $ p_{0,k}^{(n)} \leq {p'}_{0,k}^{(n)}$ for $ n=1 , \ldots
, \hat n -1$  is immediately verified.

For $\mathbf{w}= (A,B,A,A,B)$ and $\mathbf{w'}=( A,B,A,B,A) $,
considered as words on the
 alphabet $\mathcal{A} =\{ A, B, C\}$, we found  $ n_1 =4 $ and $n_2 =5
 $. Also in this case the condition of Proposition~\ref{dueinsieme}
 is verified and $ T_\mathbf{w} \preceq_{st}
T_\mathbf{w'} $ is proven.
\end{example}


So far in this section we discussed the possibility to use the
concept of leading number in the analysis of the condition
$T_{{\mathbf{w}}} \preceq_{st}T_{{\mathbf{w}'}}$. More precisely, we
noticed that the leading number can be combined  with the results in
Section \ref{sec2}. We point out however that the same concept can
be usefully  combined also with the results, presented in Section
\ref{sec:asintotico}, based on comparisons of the form $P'^n
\trianglelefteq P^n$, for all $n$ large enough. In fact when the
above comparisons cannot be easily checked, one can try to find out,
on the same alphabet, a different pair of words $\mathbf{z}$,
$\mathbf{z}'$, with $ \varepsilon_\mathbf{z}=\varepsilon_\mathbf{w},
\,\,\,\, \varepsilon_\mathbf{z'}=\varepsilon_\mathbf{w'} $  so that
the results of Section \ref{sec:asintotico} become applicable. In
such a procedure we exploit, once again, the equivalence between the
conditions $ \varepsilon_\mathbf{z}=\varepsilon_\mathbf{w} $, and  $
T_\mathbf{z}$, $ T_\mathbf{w} $ are identically distributed.

\begin{example}\label{parolefinale2}
Let us compare the words, on the  alphabet $\mathcal{A} =\{A, B\}$,
$\mathbf{w}'=(A,B,B,B,A)$ and $\mathbf{w}=(A,A,A,A,B)$, for which
$\varepsilon_{\mathbf{w}'}=(1,0,0,0,1)$,
$\varepsilon_{\mathbf{w}}=(0,0,0,0,1)$ and
$\mathbb{E}(T_\mathbf{w'})=34$, $\mathbb{E}(T_{\mathbf{w}})=32$.

At least for $n\leq 30$, the powers of the corresponding transition
matrices are not comparable in the sense $\unlhd$.

Replace now $\mathbf{w}$ with $\mathbf{z}=(B,A,A,A,A)$. We have
again $\varepsilon_{\mathbf{z}}=(0,0,0,0,1)$. However, it is readily
seen that $ P_{\mathcal{A}}^{(\mathbf{w})} \unlhd
P_{\mathcal{A}}^{(\mathbf{z})} $, and we are in a position to even
apply Proposition \ref{dueinsieme}.
\end{example}

\bigskip\bigskip
\noindent
 {\bf Acknowledgments.}
We received, from an Associate Editor of EJP,  useful
bibliographical suggestions about recent literature on the
distributions of hitting times. Moshe Shaked recalled our attention
on the papers \cite {IG}, \cite {FP05}, and \cite {FP07b}.
 Haijun Li had let us see his paper \cite {HaijunLi} and
Antonio Di Crescenzo provided us with a useful remark.

\bibliographystyle{abbrv}



\begin{thebibliography}{10}

\bibitem{BrSh}
M.~Brown and Y.~S. Shao.
\newblock Identifying coefficients in the spectral representation for first
  passage time distributions.
\newblock {\em Probab. Eng. Inf. Sci.}, 1(3):69--74, 1987.

\bibitem{CZ1}
R.~Chen and A.~Zame.
\newblock On fair coin-tossing games.
\newblock {\em J. Multivariate Anal.}, 9(1):150--156, 1979.

\bibitem{DSS}
E.~De~Santis and F.~Spizzichino.
\newblock First occurrence of a word among the elements of a finite dictionary
  in random sequences of letters.
\newblock {\em Electron. J. Probab.}, 17:no. 25, 9 pp. (electronic), 2012.

\bibitem{DiCR}
A.~Di~Crescenzo and L.~M. Ricciardi.
\newblock Comparing first-passage times for semi-{M}arkov skip-free processes.
\newblock {\em Statist. Probab. Lett.}, 30(3):247--256, 1996.

\bibitem{DiaMic}
P.~Diaconis and L.~Miclo.
\newblock On times to quasi-stationarity for birth and death processes.
\newblock {\em J. Theoret. Probab.}, 22(3):558--586, 2009.

\bibitem{FP05}
F.~Ferreira and A.~Pacheco.
\newblock Simulation of semi-markov processes and markov chains ordered in
  level crossing.
\newblock {\em Next Generation Internet Networks: Traffic Engineering IEEE},
  pages 121--128, 2005.

\bibitem{FP07b}
F.~Ferreira and A.~Pacheco.
\newblock Comparison of level-crossing times for {M}arkov and semi-{M}arkov
  processes.
\newblock {\em Statist. Probab. Lett.}, 77(2):151--157, 2007.

\bibitem{Fill2}
J.~A. Fill.
\newblock On hitting times and fastest strong stationary times for skip-free
  and more general chains.
\newblock {\em J. Theoret. Probab.}, 22(3):587--600, 2009.

\bibitem{Fill1}
J.~A. Fill.
\newblock The passage time distribution for a birth-and-death chain: strong
  stationary duality gives a first stochastic proof.
\newblock {\em J. Theoret. Probab.}, 22(3):543--557, 2009.

\bibitem{IG}
A.~Irle and J.~Gani.
\newblock The detection of words and an ordering for {M}arkov chains.
\newblock {\em J. Appl. Probab.}, 38A:66--77, 2001.

\bibitem{Kei}
J.~Keilson.
\newblock {\em Markov chain models---rarity and exponentiality}, volume~28 of
  {\em Applied Mathematical Sciences}.
\newblock Springer-Verlag, New York, 1979.

\bibitem{HaijunLi}
H.~Li.
\newblock Dependence comparison of multivariate extremes via stochastic tail
  orders.
\newblock {\em Submitted}.

\bibitem{Li}
S.~R. Li.
\newblock A martingale approach to the study of occurrence of sequence patterns
  in repeated experiments.
\newblock {\em Ann. Probab.}, 8(6):1171--1176, 1980.

\bibitem{SM}
A.~W. Marshall and M.~Shaked.
\newblock New better than used processes.
\newblock {\em Adv. in Appl. Probab.}, 15(3):601--615, 1983.

\bibitem{Miclo}
L.~Miclo.
\newblock On absorption times and {D}irichlet eigenvalues.
\newblock {\em ESAIM Probab. Stat.}, 14:117--150, 2010.

\bibitem{Sto}
A.~M{\"u}ller and D.~Stoyan.
\newblock {\em Comparison methods for stochastic models and risks}.
\newblock Wiley Series in Probability and Statistics. John Wiley \& Sons Ltd.,
  Chichester, 2002.

\bibitem{RD99}
S.~Robin and J.~J. Daudin.
\newblock Exact distribution of word occurrences in a random sequence of
  letters.
\newblock {\em J. Appl. Probab.}, 36(1):179--193, 1999.

\bibitem{Sen}
E.~Seneta.
\newblock {\em Nonnegative matrices and {M}arkov chains}.
\newblock Springer Series in Statistics. Springer-Verlag, New York, second
  edition, 1981.

\bibitem{ShSh}
M.~Shaked and J.~G. Shanthikumar.
\newblock {\em Stochastic orders}.
\newblock Springer Series in Statistics. Springer, New York, 2007.

\bibitem{StePak97}
V.~T. Stefanov and A.~G. Pakes.
\newblock Explicit distributional results in pattern formation.
\newblock {\em Ann. Appl. Probab.}, 7(3):666--678, 1997.

\bibitem{Zh}
K.~Zhou.
\newblock Hitting time distribution for skip-free {M}arkov chains: a simple
  proof.
\newblock {\em Statist. Probab. Lett.}, 83(7):1782--1786, 2013.

\end{thebibliography}

\end{document}